\newcommand{\gothic}{\mathfrak}
\newcommand{\p}{{\gothic{p}}}
\newcommand{\q}{{\gothic{q}}}
\newcommand{\m}{{\gothic{m}}}
\newcommand{\8}{{\infty}}
\newcommand{\ini}{\operatorname{in}}
\newcommand{\Ann}{\operatorname{Ann}}
\newcommand{\depth}{\operatorname{depth}}
\newcommand{\pd}{\operatorname{pd}}
\newcommand{\Spec}{\operatorname{Spec}}
\newcommand{\Supp}{\operatorname{Supp}}
\newcommand{\Tor}{\operatorname{Tor}}
\newcommand{\ass}{\operatorname{Ass}}
\newcommand{\Sing}{\operatorname{Sing}}
\newcommand{\IPD}{\operatorname{IPD}}
\newcommand{\FPD}{\operatorname{FPD}}
\newcommand{\RGD}{\operatorname{SR}}
\newcommand{\Cl}{\operatorname{Cl}}
\newcommand{\Hom}{\operatorname{Hom}}
\newcommand{\length}{\ell}
\renewcommand{\phi}{\varphi}
\DeclareMathOperator{\Pic}{Pic}
\theoremstyle{plain}
\newtheorem{thm}{Theorem}
\newtheorem{cor}[thm]{Corollary}
\newtheorem{prop}[thm]{Proposition}
\newtheorem{lemma}[thm]{Lemma}
\newtheorem{fact}[thm]{Fact}
\newtheorem{conj}[thm]{Conjecture}
\newtheorem{eg}[thm]{Example}
\theoremstyle{definition}
\newtheorem{defn}[thm]{Definition}
\theoremstyle{remark}
\newtheorem{rmk}[thm]{Remark}
\begin{document}
\title[On the (non)rigidity of the Frobenius endomorphism over Gorenstein rings]
{On the (non)rigidity of the Frobenius endomorphism over Gorenstein rings}

\author{Hailong Dao}
\address{Department of Mathematics\\
University of Kansas\\
 Lawrence, KS 66045-7523 USA}
\email{hdao@math.ku.edu}

\author{Jinjia Li}
\address{Department of Mathematical Sciences \\
       Middle Tennessee State University  \\
        Murfreesboro, TN 37132  USA}
        \email{jinjiali@mtsu.edu}

\author{Claudia Miller}
\address{Mathematics Department \\
       Syracuse University  \\
        Syracuse, NY 13244-1150  USA}
\email{clamille@syr.edu}
\date{\today}
\thanks{The third author gratefully acknowledges partial financial support
     from NSA grant \# H98230-06-1-0035. The first author is partially supported by NSF grant 0834050}
\keywords{rigidity, Frobenius endomorphism, $\Tor$, Picard
group, isolated singularity}

\subjclass{Primary: 13A35; Secondary:13D07, 13H10.}
\bibliographystyle{amsplain}

\numberwithin{thm}{section}
\numberwithin{equation}{section}
\begin{abstract} It is well-known that for a large class of  local rings of positive characteristic, including complete intersection rings, the Frobenius endomorphism can be used as a test  for finite projective dimension. In this paper, we exploit this property to study the structure of such rings. One of our results states that the Picard group of the punctured spectrum of such a ring $R$ cannot have $p$-torsion. When $R$ is a local  complete intersection, this recovers (with a purely local algebra proof)  an analogous statement for complete intersections in projective spaces first given in SGA and also a special case of a conjecture by Gabber.  Our method also leads to many simply constructed examples where rigidity for the Frobenius endomorphism does not hold, even when the rings are Gorenstein with isolated singularity. This is in stark contrast to the situation for complete intersection rings.  Also, a related length criterion for modules of finite length and finite projective dimension is discussed towards the end.
\end{abstract}

\maketitle

%%%%%%%%%%%%%%%%%%%%%%%%%%%%%%%%%%%%%%%
\section{Introduction}\label{intro}
%%%%%%%%%%%%%%%%%%%%%%%%%%%%%%%%%%%%%%%

The Frobenius endomorphism for rings of positive characteristic has been
one of the central objects of study in homological commutative
algebra over the past decades. Not only is it a useful tool in
proofs of homological conjectures, but also its intrinsic homological properties
have been shown to have strong connections with the structure of the
ring or of modules over it. In this article we provide several surprising connections, for example the relationship between the ability of Frobenius to detect finite projective dimension of modules and  the torsion part of the divisor class group.

First let us review some  history and notations. In 1969, Kunz characterized regular local rings as those for which the Frobenius endomorphism $f\colon R \to R$ (or equivalently some iteration of it) is flat  \cite[Thm.\ 2.1]{K}.
Since then, a list of papers has yielded further similar homological results for $f$, each analogous to a classical homological result concerning the residue field $k$ (viewed as an $R$-module via $\pi\colon R \to k$); see the survey article \cite{M2} for further details, also \cite{AIM, IS}. We will use the notation ${}^{f^n}\!\! R$ for $R$ viewed as an $R$-module via the $n$th iteration $f^n$ of $f$.

For their celebrated proof of the Intersection Theorem, Peskine and Szpiro generalized one direction of Kunz's result, and shortly thereafter Herzog proved the converse, yielding the following equivalence \cite[Cor.\ 2]{PS1},  \cite[Thm.\ 1.7]{PS2}, \cite[Thm.\ 3.1]{He}:
\vspace{1mm}
\begin{center}
$M$ has finite projective dimension $\iff$ $\Tor^R_i(M,{}^{f^n}\!\! R)=0$ for all $i>0$ and all $n>0$
\end{center}
\vspace{1mm}
This leads one to ask to what extent the module ${}^{f^n}\!\! R$ could function as a test module for finite projective dimension: Is the vanishing of $\Tor^R_i(M,{}^{f^n}\!\! R)$ for just {\it one} value each of $i>0$ and $n>0$ sufficient? In particular, this would imply that the $R$-module ${}^{f^n}\!\! R$ is {\it rigid}, that is, that
\vspace{1mm}
\begin{center}
$\Tor^R_i(M,{}^{f^n}\!\! R)=0 \implies \Tor^R_{i+1}(M,{}^{f^n}\!\! R)=0$
\end{center}
\vspace{1mm}

Several steps toward these goals have been made in recent years.
In the general setting, Koh and Lee proved a {\it finiteness}
result: there is a constant $c(R)$, depending only on the ring
$R$, such that vanishing of $\Tor^R_i(M,{}^{f^n}\!\! R)$ for any
$\depth R+1$ consecutive values of $i>0$ and any one value of
$n\geq c(R)$ implies that $M$ has finite projective dimension
\cite[Prop.\ 2.6]{KL}. In fact, they showed that $\depth R$
consecutive values of $i$ suffice if $R$ is Cohen-Macaulay of
positive dimension. The best possible result (\cite{AM, D3}), however, occurs in the setting
of complete intersection rings: 
\begin{thm}
\label{AMD}(Avramov-Miller, Dutta)
Let $R$ be a local complete intersection and $M$ a finitely generated $R$-module. 
Then the vanishing of $\Tor^R_i(M,{}^{f^n}\!\! R)$ for {\it one}  value each of $i>0$
and $n>0$ implies that $M$ has finite projective dimension.
\end{thm}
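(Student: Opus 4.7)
My plan is to combine the rigidity theory of Tor over complete intersections with the special structure of the Frobenius module. By faithful flatness of completion and of an extension to a perfect residue field, first reduce to the case that $R$ is complete with perfect residue field. Cohen's structure theorem then gives $R = Q/(g_1, \ldots, g_c)$ with $Q$ a complete regular local ring and $g_1, \ldots, g_c$ a regular sequence whose length $c$ equals the codimension of $R$. Setting $q = p^n$, Kunz's theorem implies that the Frobenius on $Q$ is flat, hence ${}^{f^n}Q$ is free over $Q$, and so ${}^{f^n}R \isom {}^{f^n}Q/(g_1^q, \ldots, g_c^q){}^{f^n}Q$. In particular, the Koszul complex on $g_1^q, \ldots, g_c^q$ resolves ${}^{f^n}R$ over ${}^{f^n}Q$, which pulls back to a manageable complex over $R$.

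The main engine I would deploy is the theory of Eisenbud-Gulliksen cohomology operators: polynomial variables $\chi_1, \ldots, \chi_c$ of cohomological degree $2$ act naturally on $\Tor^R_*(M, N)$ for any finitely generated $M, N$, and by Avramov-Buchweitz this graded module is finitely generated over $k[\chi_1, \ldots, \chi_c]$ with support variety satisfying $V_R(M, N) = V_R(M) \cap V_R(N)$. Using the Koszul resolution above, I would compute the action of $\chi_j$ on $\Tor^R_*(k, {}^{f^n}R)$ and identify $V_R({}^{f^n}R) = \mathbb{A}^c_k$, since each $\chi_j$ corresponds up to scalar to multiplication by a lift of $g_j^q$. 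Combined with Avramov-Buchweitz this yields $V_R(M, {}^{f^n}R) = V_R(M)$, and finiteness of $\pd_R M$ is equivalent to $V_R(M) = \{0\}$.

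The main obstacle is then to upgrade a \emph{single} vanishing $\Tor^R_i(M, {}^{f^n}R) = 0$ to vanishing of many consecutive Tors, since the standard Jorgensen-style rigidity theorem over codimension $c$ requires $c$ consecutive vanishings. The crucial extra input is that each $\chi_j$ acts on $\Tor^R_*(M, {}^{f^n}R)$ by a lift of $g_j^q$, which grows very sharply with $n$; combined with finite generation over $k[\chi_1, \ldots, \chi_c]$ and the bound $c(R)$ of Koh-Lee, this rigid action should force one vanishing to propagate through a full block of $c$ consecutive degrees, after which Jorgensen's theorem gives vanishing of all higher Tors and Peskine-Szpiro-Herzog concludes $\pd_R M < \infty$. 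A parallel route I would pursue is an induction on codimension in the spirit of Dutta: mod out by a generic element of the defining ideal to reduce to a CI of smaller codimension, the base case $c = 1$ following from the $2$-periodicity of hypersurface resolutions together with Huneke-Wiegand style rigidity, where single-vanishing-implies-finite-projective-dimension becomes tractable directly.
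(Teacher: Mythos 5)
The paper does not actually prove Theorem~\ref{AMD}; it cites it as a known result of Avramov--Miller \cite{AM} and Dutta \cite{D3}, so there is no ``paper's proof'' to compare against. Evaluating your blind attempt on its own merits: you have correctly assembled the standard toolkit used in \cite{AM} (reduction to the complete F-finite case, Cohen presentation $R=Q/(g_1,\dots,g_c)$, flatness of Frobenius on $Q$ via Kunz, the Koszul structure of ${}^{f^n}\!\!R$ over $Q$, and the Gulliksen/Eisenbud cohomology operators with the Avramov--Buchweitz support-variety formalism). However, the proposal has a genuine gap precisely at the step you yourself flag as ``the main obstacle.'' Support varieties only govern \emph{eventual} (non)vanishing: the equality $V_R(M,{}^{f^n}\!\!R)=V_R(M)$ shows that if $\pd_R M=\infty$ then $\Tor^R_i(M,{}^{f^n}\!\!R)\neq 0$ for infinitely many $i$, but it says nothing by itself about a single isolated vanishing in low degree. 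Your proposed fix --- that $\chi_j$ ``acts by a lift of $g_j^q$, which grows very sharply with $n$,'' combined with the Koh--Lee bound $c(R)$, ``should force one vanishing to propagate'' --- is not an argument: the Eisenbud operators act as degree-$(-2)$ chain maps determined by the presentation, not by multiplication by ring elements; there is no meaningful sense in which $\chi_j$ ``is'' $g_j^q$; and since $n$ is fixed in the hypothesis, ``growth with $n$'' cannot be invoked. Koh--Lee requires $\depth R + 1$ consecutive vanishings, so it does not help you bridge from one vanishing to a block of $c$ vanishings. The alternative route you mention (induction on codimension in Dutta's spirit, reducing to hypersurfaces and periodicity) is only gestured at and not carried out, and the hypersurface base case needs its own argument.

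There is also a technical slip worth noting: the claimed isomorphism ${}^{f^n}\!\!R \cong {}^{f^n}\!Q/(g_1^q,\dots,g_c^q)\,{}^{f^n}\!Q$ is, under the natural reading, the statement that ${}^{f^n}\!\!R \cong R\otimes_Q {}^{f^n}\!Q$. But $R\otimes_Q {}^{f^n}\!Q \cong Q/(g_1^q,\dots,g_c^q)$ (Frobenius-twisted), which is strictly larger than ${}^{f^n}\!\!R = Q/(g_1,\dots,g_c)$ (Frobenius-twisted); the natural map between them is the surjection $Q/(g^{[q]})\twoheadrightarrow Q/(g)$, not an isomorphism. The intermediate ring $S=Q/(g_1^q,\dots,g_c^q)$ does play a central role in the actual Avramov--Miller argument, but through the factorization of $f^n_R$ as $R\to S\to R$ and a resulting exact relationship between the cohomology operators of the two presentations --- a precise mechanism that converts one vanishing into total vanishing. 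That mechanism is exactly what is missing from your write-up. In summary: the high-level framework is right and shows good familiarity with the literature, but the proposal does not constitute a proof because the decisive step is only conjectured, not established.
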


Phenomena like this can occur over
non-complete intersection rings as well. In such a case, we call the
corresponding ${}^{f^n}\!\! R$ strongly rigid (which is equivalent
to being rigid when $n \geq c(R)$ by Koh and Lee's result above). See
Definition~\ref{stronglyrigid} and Example~\ref{eg1} ) for known examples. 

\vspace{2.5mm}

In Section~\ref{ci}, we study the properties of Gorenstein local
rings whose corresponding ${}^{f^n}\!\! R$ is strongly rigid. We
show that if $R$ is Gorenstein such that ${}^{f^n}\!\! R$ is
locally strongly rigid (i.e., strongly rigid at the localization
at every prime ideal), then the minimal infinite projective
dimension locus of a module $M$ (see Definition~\ref{IPD}) must be
contained in the set of associated primes of $F^n(M)$ (see
Theorem~\ref{mainthm}). One consequence of this result is the
following characterization for modules of finite projective
dimension:

\vspace{2.5mm}

\noindent {\bf{Corollary~\ref{perfect}}} {\emph{ Let $R$ be
Gorenstein local ring  such that ${}^{f^n}\!\!R$ is locally
strongly rigid for some $n>0$ and $M$ an $R$-module. Then $M$ has
finite projective dimension if and only if $\ass F^n(M)$ is
contained in the finite projective dimension locus of $M$. }}

\vspace{2.5mm}

Note that the class of rings such that ${}^{f^n}\!\!R$ is locally
strongly rigid for all $n>0$ includes, but is strictly bigger than, the class of all local complete intersections, see Example \ref{eg1}.

We also apply Theorem~\ref{mainthm} to prove that the divisor
class groups of certain Gorenstein domains have no $p$-torsion.

\vspace{2.5mm}

\noindent {\bf{Theorem~\ref{class}}} {\emph{ Let $R$ be a Gorenstein local ring such that ${}^{f^n}\!\!R$ is locally
strongly rigid for some $n>0$.  Let $I$ be an reflexive ideal
 such that $I$ is locally free in codimension $2$. Furthermore, assume that $\Hom_R(I,I) \cong R$. 
Let $q=p^n$. Then if $I^{(q)}$
 satisfies Serre's condition $(S_3)$, $I$ must be principal. In particular, the Picard group of the
punctured spectrum of $R$ has no $p$-torsion. In particular, if  $R$ satisfies
condition $(R_2)$, $\Cl(R)$ has no $p$-torsion.}}

\vspace{2.5mm}

In particular, the theorem above shows that the Picard groups of the punctured spectrum of  complete intersection rings cannot have $p$-torsion. For complete intersections in projective spaces, such a  result was first proved by Deligne in SGA 7 (Theorem 1.8, \cite{De}) using sophisticated geometric machinery. We also note that this particular case confirms the positive characteristic case of the following conjecture we learned from \cite{Gab}: 
\begin{conj}(Gabber)\label{GabberConj}
Let $(R,\m)$ be a local complete intersection ring of dimension $3$. Let $U_R=\Spec(R) -\{\m\}$ be the punctured spectrum of $R$. Then $\Pic(U_R)$ is torsion-free. 
\end{conj}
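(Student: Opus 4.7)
The plan is to apply Corollary~\ref{perfect} to show that $\pd_R I < \infty$, and then to promote this to principality of $I$ using the Gorenstein and $\Hom$ hypotheses.

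The main step is to show $\ass F^n(I) \subseteq \FPD(I)$. Since $I$ is locally free in codimension $2$, $\FPD(I)$ already contains every prime of height $\leq 2$, so it suffices to rule out primes of height $\geq 3$ from $\ass F^n(I)$. The natural tools are the two short exact sequences
\[
0 \to \Tor^R_1(R/I, {}^{f^n}\!\!R) \to F^n(I) \to I^{[q]} \to 0, \qquad 0 \to I^{[q]} \to I^{(q)} \to I^{(q)}/I^{[q]} \to 0,
\]
where by the locally-free-in-codimension-$2$ hypothesis on $I$ (which gives $(r^q)$ a principal ideal locally with $r$ a non-zerodivisor at each such prime), both the $\Tor$ kernel on the left and the reflexivization cokernel on the right are supported in codimension $\geq 3$. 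The $(S_3)$ hypothesis on $I^{(q)}$ forces $\depth I^{(q)}_\p \geq 3$ at every prime $\p$ of height $\geq 3$. A depth-lemma analysis, combined with invoking the local strong rigidity of ${}^{f^n}\!\!R$ at each offending prime (via Theorem~\ref{mainthm}), should yield the desired containment. Corollary~\ref{perfect} then produces $\pd_R I < \infty$.

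Once $\pd_R I < \infty$ is established, I would use the following standard fact: over a Gorenstein local ring, a reflexive ideal of rank $1$ with $\Hom_R(I,I) \cong R$ and finite projective dimension is principal. One proof proceeds via an Auslander-Buchsbaum / Fitting-ideal computation on a minimal free resolution of $I$, using reflexivity to bound the depth of $I$ from below and the $\Hom$-hypothesis to pin down the Picard-type class.

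For the Picard group and class group consequences: given a $p$-torsion class $[L] \in \Pic(U_R)$, represent it by a reflexive rank-$1$ $R$-module $I$ that is locally free on $U_R$, hence locally free in codimension $2$. Its $q$-th reflexive power $I^{(q)}$ represents the trivial class on $U_R$, so using $\depth_\m R \geq 2$ one has $I^{(q)} \cong R$, which automatically satisfies $(S_3)$ since $R$ is Cohen-Macaulay. The main part yields $I$ principal, so $[L] = 0$. Under the $(R_2)$ hypothesis every reflexive ideal is automatically locally free in codimension $2$, so the same argument shows $\Cl(R)$ has no $p$-torsion.

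The main obstacle is the depth-lemma step in the first paragraph: the $\Tor^R_1$ kernel embeds into $F^n(I)$, and its associated primes a priori contribute to $\ass F^n(I)$. Ruling this out requires delicately combining the $(S_3)$ depth of $I^{(q)}$, the codimension-$\geq 3$ support of the discrepancy terms, and the strong rigidity of Frobenius at each localization; this is where the paper's machinery must be applied with care rather than mechanically.
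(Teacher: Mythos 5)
Your plan has a logical circularity at its core, and you correctly sense it yourself in the last paragraph, but the gap cannot be closed the way you hope. You propose to prove the inclusion $\ass F^n(I) \subseteq \FPD(I)$ directly (using the $(S_3)$ hypothesis and a depth analysis) and then invoke Corollary~\ref{perfect} to get $\pd_R I < \infty$. But Corollary~\ref{perfect} says that this inclusion is \emph{equivalent} to $\pd_R I < \infty$, and Theorem~\ref{mainthm} shows the obstruction explicitly: if $\pd_R I = \infty$, then any $\p \in \min\IPD(I)$ is forced into $\ass F^n(I)$, because $\Tor^{R_\p}_1(R_\p/I_\p, {}^{f^n}\!\!R_\p)$ is a nonzero finite-length submodule of $F^n(I)_\p$. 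In fact, one can show from your first short exact sequence $0 \to \Tor_1^R(R/I,{}^{f^n}\!\!R) \to F^n(I) \to I^{[q]} \to 0$ that every height-$\geq 3$ prime of $\ass F^n(I)$ lies in $\IPD(I)$ (since an ideal of $R$ has depth at least $1$ at any nonminimal prime, the socle of $F^n(I)_\p$ must come from the kernel term). So the inclusion you want to prove is exactly the statement you are trying to deduce; the depth lemma and $(S_3)$ condition give no independent purchase on $\ass F^n(I)$.

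The paper sidesteps this by \emph{not} working with the module $I$ at all. It argues by contradiction and replaces $I$ with a cyclic module: write $I = (a):(b)$, so $\IPD(I) = \IPD(R/(a,b))$ via $0 \to R/(a:b) \to R/(a) \to R/(a,b) \to 0$. Now $F^n(R/(a,b)) = R/(a^q,b^q)$ is a quotient ring with no $\Tor$ correction term, and a prime $\p \in \min\IPD$ lands in $\ass R/(a^q,b^q)$ by Theorem~\ref{mainthm}. Localizing $0 \to R/(a^q:b^q) \to R/(a^q) \to R/(a^q,b^q) \to 0$ at $\p$ and using $(a^q:b^q)=I^{(q)}$ with $\depth I^{(q)}_\p \geq 3$ and $\depth (R/(a^q))_\p \geq 2$ forces $\depth (R/(a^q,b^q))_\p \geq 1$, contradicting $\p \in \ass$. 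This cyclic reduction is precisely what kills the ``$\Tor_1$ kernel obstruction'' you flag; without it, the $(S_3)$ hypothesis never touches the socle that $\Tor_1$ contributes to $F^n(I)$. Two smaller omissions: your sketch handles only $p$-torsion, whereas the conjecture (and Corollary~\ref{classgroup}) also needs the classical result of Robbiano to kill torsion of order prime to $p$; and the argument as given applies only in the equicharacteristic case, which is all the paper claims as well.
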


It was implied in \cite{Gab}  that the positive characteristic case is known, but we cannot find a precise reference. In any case, it is worth noting that our proof is purely homological and quite simple.

\vspace{2.5mm}

In Section~\ref{example}, we push the ideas in the previous
section further to construct many examples of Gorenstein
local rings $R$ such that ${}^{f^n}\!\! R$ is not strongly rigid.
In other words, the vanishing of $\Tor^R_i(M,{}^{f^n}\!\! R)$ for
just one value each of $i>0$ and $n>0$ is {\it not} sufficient to
conclude that $M$ has finite projective dimension. Two 
different approaches are used in these constructions. The first
approach boils down to finding an isolated Gorenstein singularity
with torsion class group and applying Theorem~\ref{class} above; see
Example~\ref{veronese}. The second approach takes a 
different route via Lemma~\ref{firstlemma}. 
Obtaining an actual example requires some explicit computations on
the determinantal ring of 2 by 2 minors in 9 variables and hence
is less general than the first approach; see
Example~\ref{counterexamplethm}. The bonus is, however, that these 
have a torsion-free class group. 

In Section~\ref{observe}, we study the connection between (strong)
rigidity and numerical rigidity (see Definitions~\ref{refinerigid} and 
\ref{numericalrigid}) of the Frobenius
endomorphism. The main result we prove there is

\vspace{2.5mm}

\noindent {\bf{Theorem~\ref{connection}}} {\emph{ Let $R$ be a
Cohen-Macaulay local ring with isolated singularity and of
positive dimension. Fix some $n>0$.  If for every nonzerodivisor
$y \in R$, ${}^{f^n}\!\!(R/yR)$ is numerically rigid, then
${}^{f^n}\!\! R$ is strongly rigid against modules of dimension up
to one.}}

\vspace{2.5mm}

The rest of the introduction contains a review of the notation and 
definitions used throughout the paper.
We assume throughout that $R$ is a commutative Noetherian local
ring of prime characteristic $p>0$ and that all $R$-modules $M$ and $N$ are
finitely generated. The Frobenius endomorphism $f\colon R \to R$ is defined by $f(r)=r^p$ for $r \in
R$; its self-compositions are given by $f^n(r) = r^{p^n}$.
Restriction of scalars along each iteration $f^n$ endows $R$ with a new $R$-module structure, denoted by ${}^{f^n}\!\! R$.

The Frobenius functor, introduced by Peskine and Szpiro in \cite{PS2}, is given by base change along
the Frobenius endomorphism: $F_R(M) = M\otimes_R {}^{f}\! R$ for any $R$-module $M$.
Its compositions are given by $F_R^n(M) = M\otimes_R {}^{f^n}\!\! R$,
namely base change along the compositions $f^n$ of $f$. We omit the subscript $R$ if there is no ambiguity about $R$.
Note particularly that the module structure on $F^n(M)$ is via usual multiplication in $R$ on the right hand factor of the tensor product. The values of the derived functors $\Tor^R_i (M,{}^{f^n}\!\! R)$ are similarly viewed as $R$-modules via the target of the base change map $f^n$.

It is easy to verify that $F^n(R) \cong R$ and that for cyclic modules
$F^n(R/I) \cong R/I^{[q]}$, where $q=p^n$ and $I^{[q]}$ denotes the
ideal generated by the $q$-th powers of the generators of $I$.
For convenience, we frequently use $q$ to denote the power $p^n$,
which may vary.

In the sequel, we use $\length(M)$ and $\pd M$ to denote the
\emph{length} and \emph{projective dimension}, respectively,  of the module
$M$. By codimension of $M$,  we mean $\dim R-\dim M$. We use the notation $\bold x$ for a sequence of elements of $R$ and often write simply $R/\bold x$ for $R/(\bold x)$ to save space. Likewise, $\bold x^q$ denotes the ideal generated by the $q$th powers of the sequence $\bold x$, {\it not} the $q$th power of the ideal $\bold x$.

%%%%%%%%%%%%%%%%%%%%%%%%%%%%%%%%%%%%%%%%%%%%%%%%%%%%%%%%%%%%%%%%%%%%%%%
\section{Strong rigidity of Frobenius  and torsion elements in divisor class groups} \label{ci}
%%%%%%%%%%%%%%%%%%%%%%%%%%%%%%%%%%%%%%%%%%%%%%%%%%%%%%%%%%%%%%%%%%%%%%%

In this section we investigate the consequences of the phenomenon that over certain rings the Frobenius map can be used to test for finite projective dimension (e.g., over complete intersection rings). This work enables us to prove strong results about torsion elements in the class groups of complete intersection rings and also allows us to construct counterexamples to such phenomena over non-complete intersection rings. We begin with some convenient definitions to facilitate the discussion.

\begin{defn}\label{stronglyrigid} An $R$-module $N$ is called {\it strongly rigid} if  for any integer $i$ and any finitely generated $R$-module $M$, $\Tor_i^R(M,N)=0$ implies $\pd_RM <\infty$. The module $N$ is called {\it locally strongly rigid} if $N_{\p}$ is strongly rigid for all $\p \in \Spec R$.
\end{defn}

\begin{eg}\label{eg1}
If $R$ is a local complete intersection ring, then ${}^{f^n}\!\!R$ is
locally strongly rigid for all $n$, see Theorem \ref{AMD}. For any local Cohen-Macaulay ring $R$ of dimension at most $1$, there is a number
$c(R)$ such that for any $n \geq c(R)$, ${}^{f^n}\!\!R$ is
strongly rigid  by virtue of Koh
and Lee's result mentioned in the introduction. In particular, when $(R,\m)$ is Artinian and $\m^{[p]}=0$, then  ${}^{f^n}\!\!R$
is (locally) strongly rigid for all $n$, cf. \cite[2.2.8]{M2}. 
\end{eg}

\noindent We also cite the following definition (see, for example, \cite{Dao}):
\begin{defn}\label{IPD} Let $M$ be an $R$-module. One defines the infinite projective dimension locus of $M$ as
\[\IPD(M)=\{\p \in \Spec R | \pd_{R_{\p}} M_{\p}=\8\}\]
Similarly, define $\FPD(M)$ to be the finite projective dimension locus of $M$.
Finally, we define the $n$-strong rigidity locus of $R$ as
\[\RGD_n(R)=\{\p \in \Spec R | ,{}^{f^n}\!\!R_{\p}\text{ is strongly rigid}\}\]
\end{defn}

The following standard facts, which we state without proof, will be used frequently:

\begin{fact}
\label{fact}
Let $f\colon R \to S$ be a ring homomorphism and $\p$ a prime ideal of $S$.
Then for each $i\geq 0$ and $R$-module $M$ there is a natural isomorphism
\[
\Tor_i^R(M,S)_\p
\cong
\Tor_i^{R_{f^{{\textrm{--}}1}\!(\p)}}(M_{f^{{\textrm{--}}1}\!(\p)}, S_\p)
\]
Furthermore, if $f$ is the Frobenius endomorphism of $R$, then
$f^{{\textrm{--}}1}\!(\p) = \p$ and
$R_{f^{{\textrm{--}}1}\!(\p)} \to S_\p$ is the Frobenius endomorphism of $R_\p$.
\end{fact}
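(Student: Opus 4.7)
The plan is to build the isomorphism by chasing a projective resolution through base change and localization, which are the two operations involved on both sides. Set $\q = f^{-1}(\p)$, and choose a projective resolution $P_\dt \to M$ of $M$ over $R$. By definition, $\Tor_i^R(M,S) = H_i(P_\dt \otimes_R S)$, and since localization at $\p$ is exact on $S$-modules, it commutes with taking homology, giving
\[
\Tor_i^R(M,S)_\p \isom H_i\bigl((P_\dt \otimes_R S)_\p\bigr) \isom H_i(P_\dt \otimes_R S_\p),
\]
where the last step uses that tensor product commutes with localization on the right factor.

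Next, I would factor the $R$-action on $S_\p$ through $R_\q$. Because $f$ sends $R \setminus \q$ into $S \setminus \p$, every element of $R \setminus \q$ acts invertibly on $S_\p$, so the $R$-module structure on $S_\p$ extends uniquely to an $R_\q$-module structure, and the map $R \to S_\p$ factors canonically as $R \to R_\q \to S_\p$. This gives a natural identification
\[
P_\dt \otimes_R S_\p \isom (P_\dt)_\q \otimes_{R_\q} S_\p.
\]
Since $(P_\dt)_\q \to M_\q$ is a projective resolution over $R_\q$ (projectivity and exactness are both preserved under localization), taking homology yields $\Tor_i^{R_\q}(M_\q, S_\p)$ as required. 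Checking that this composite isomorphism is natural in $M$ is routine: different choices of resolution produce chain-homotopic comparison maps, and each step above respects such comparisons.

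For the final assertion about the Frobenius, note that $r \in f^{-1}(\p)$ means $r^p \in \p$, which is equivalent to $r \in \p$ since $\p$ is prime; hence $f^{-1}(\p) = \p$. The induced map $R_\p \to R_\p$ on localizations is given on fractions by $r/s \mapsto r^p/s^p$, which is precisely the Frobenius endomorphism of the local ring $R_\p$.

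The main ``obstacle'' is really just bookkeeping: one must be careful to distinguish the two $R$-module structures on $S$ (and hence on the Tor modules) so that the localization at $\p$ on the left-hand side corresponds correctly to a localization at $\q$ on the $R$-side and at $\p$ on the $S$-side. Once this is set up, everything is formal, which is why the authors state it as a fact without proof.
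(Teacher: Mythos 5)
Your proof is correct, and it gives precisely the standard argument one would expect behind this statement, which the paper records as a fact without proof. All the ingredients are in place and correctly handled: the associativity identifications $(P_\dt \otimes_R S)\otimes_S S_\p \isom P_\dt \otimes_R S_\p \isom (P_\dt)_{\q} \otimes_{R_\q} S_\p$, the observation that $f(R\setminus\q)\subseteq S\setminus\p$ so that the structure map $R\to S_\p$ factors through $R_\q$, the preservation of projective resolutions under localization, and, for the Frobenius case, the primality of $\p$ forcing $f^{-1}(\p)=\p$ together with the explicit description $r/s\mapsto r^p/s^p$ of the induced map on local rings. You also correctly identify the one genuinely delicate bookkeeping point, namely that the localization on the left side is with respect to the $S$-module (target) structure on $\Tor_i^R(M,S)$ as stipulated in the paper's conventions; this is exactly what makes the identification with $S_\p$ (rather than $S\otimes_R R_\q$) the right one. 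Nothing is missing.
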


\begin{thm}
\label{mainthm}
Let $R$ be a Gorenstein local ring and $M$ an $R$-module. Then
\[\min \IPD(M) \cap \RGD_n(R) \subseteq \ass F^n(M)\]
In particular, if ${}^{f^n}\!\!R$ is locally strongly rigid, then
\[\min \IPD(M)  \subseteq \ass F^n(M)\]
\end{thm}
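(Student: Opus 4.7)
The plan is to localize at $\p$ and reduce to proving the claim in the special case $\p = \m$ is maximal. By Fact~\ref{fact}, both hypotheses pass to the local ring $R_{\p}$: the Frobenius pushforward ${}^{f^n}\!\!R_\p$ is strongly rigid, $M_\p$ has infinite projective dimension, and by the minimality of $\p$ in $\IPD(M)$, every $M_\q$ with $\q \subsetneq \p$ has finite projective dimension. Since the statement $\p \in \ass F^n(M)$ is also local in nature, it suffices to prove, under these localized hypotheses, that $\m \in \ass F^n(M)$.

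Next I would pin down the higher Tors $T_i := \Tor^R_i(M, {}^{f^n}\!\!R)$ for $i \geq 1$. Strong rigidity of ${}^{f^n}\!\!R$ together with $\pd_R M = \infty$ forces $T_i \neq 0$ for every $i \geq 1$; otherwise strong rigidity would yield $\pd_R M < \infty$. On the other hand, for every $\q \neq \m$ we have $\pd_{R_\q} M_\q < \infty$, so by the Peskine-Szpiro-Herzog theorem recalled in the introduction, $\Tor_i^{R_\q}(M_\q, {}^{f^n}\!\!R_\q) = 0$ for every $i \geq 1$. Combined with Fact~\ref{fact} this gives $(T_i)_\q = 0$ for $\q \neq \m$, so each $T_i$ is a nonzero module of finite length.

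The heart of the argument is the four-term exact sequence obtained by applying $F^n$ to the start of the minimal free resolution $0 \to \Omega M \to F_0 \to M \to 0$:
\[
0 \to T_1 \to F^n(\Omega M) \to F^n(F_0) \to F^n(M) \to 0.
\]
Splitting via $B := \ker(F^n(F_0) \to F^n(M))$ produces $0 \to B \to F^n(F_0) \to F^n(M) \to 0$ and $0 \to T_1 \to F^n(\Omega M) \to B \to 0$. Since $B$ embeds in the free module $F^n(F_0)$, which has depth $\dim R \geq 1$ (the Artinian case is trivial), $B$ has no socle, and applying $\Hom_R(k, -)$ across the second sequence transports the nonzero socle of $T_1$ into $F^n(\Omega M)$. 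The delicate step is to push this socle information all the way back to $F^n(M)$: via Gorenstein local duality $\m \in \ass F^n(M)$ is equivalent to $\Ext^{\dim R}_R(F^n(M), R) \neq 0$, and I would use the hyper-$\Ext$ spectral sequence applied to the complex $F^n(F_\bullet)$, whose only non-finite-length homology is $F^n(M)$ in degree $0$, to identify $\Ext^{\dim R}(F^n(M), R)$ as the unique contributor in total degree $\dim R$, forced to be nonzero by the Matlis duals of the $T_i$.

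I expect this last step to be the main obstacle: showing that the depth-0 behavior witnessed by the finite-length $T_i$ is visible not merely in the intermediate syzygy $F^n(\Omega M)$ but in $F^n(M)$ itself. This is where the Gorenstein hypothesis is used essentially — both in the equivalence between depth~$0$ and non-vanishing of top $\Ext$ against $R$, and in ensuring that $\Ext^j(T_i, R)$ vanishes outside $j = \dim R$, pinning down the spectral sequence enough to conclude.
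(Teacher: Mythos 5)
Your localization step and your identification of the $T_i$ as nonzero modules of finite length are both correct and match the paper's setup. But the core of your argument has a genuine gap, which you yourself flag: the four-term sequence obtained from the minimal free resolution $0 \to \Omega M \to F_0 \to M \to 0$ places the nonzero finite-length module $T_1 = \Tor_1^R(M,{}^{f^n}\!\!R)$ as a \emph{submodule} of $F^n(\Omega M)$, not of $F^n(M)$. Your socle-tracking argument then proves $\depth F^n(\Omega M)_\p = 0$, which is the wrong module. The proposed repair via a hyper-$\Ext$ spectral sequence on $\Hom(F^n(F_\bullet), R)$ is plausible as far as it goes --- one can indeed check that $\Ext^d(F^n(M), R)$ survives to $E_\infty$ in total degree $d$ and is isomorphic to $H^d(\Hom(F^n(F_\bullet),R))$ --- but this merely trades one problem for another: you would still need to show that $H^d(\Hom(F^n(F_\bullet),R))$ is nonzero, and there is no obvious input forcing that. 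So as written the proof does not close.

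The paper sidesteps the issue by using a short exact sequence in which $M$ sits on the \emph{left}, not the right. Since $R$ is Gorenstein, the Auslander--Buchweitz maximal Cohen--Macaulay approximation gives $0 \to M \to Q \to N \to 0$ with $\pd Q < \infty$ and $N$ maximal Cohen--Macaulay. Because $\pd Q < \infty$, all $\Tor_i(Q, {}^{f^n}\!\!R)$ vanish for $i > 0$, so tensoring with ${}^{f^n}\!\!R$ yields an honest embedding
\[
0 \to \Tor_1^R(N, {}^{f^n}\!\!R) \to F^n(M),
\]
and the module being embedded is (after localizing at $\p$) nonzero of finite length by exactly the argument you gave for $T_1$: strong rigidity gives nonvanishing, and minimality of $\p$ in $\IPD(M)$ gives finite length, noting that $\pd M_\q<\infty$ if and only if $\pd N_\q<\infty$. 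This lands directly inside $F^n(M)_\p$ and yields $\depth F^n(M)_\p = 0$ with no further work. The lesson is that the ``resolution direction'' and the ``coresolution/approximation direction'' are genuinely not symmetric here: using a free cover buries the depth-zero witness one syzygy too deep, whereas the CM approximation places $M$ as a submodule with a finite-projective-dimension cokernel and makes the embedding into $F^n(M)$ immediate.
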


\begin{proof}
Since $R$ is Gorenstein, by the Cohen-Macaulay approximation due
to Auslander and Buchweitz \cite[1.8]{AB}, there is a short exact
sequence
\[
0 \to M \to Q \to N \to 0,
\]
where $\pd Q< \8$ and $N$ is maximal Cohen-Macaulay. Tensoring
with the Frobenius endomorphism, we have an embedding

\begin{equation}{\label{basic}}
0 \to \Tor^R_1 (N,{}^{f^n}\!\! R) \to F^n(M).
\end{equation}

Take any $\p \in \min \IPD(M) \cap \RGD_n(R)$; then $\pd_{R_{\p}}
M_{\p}=\8$ and ${}^{f^n}\!\!R_{\p}$ is strongly rigid. It follows
that $\pd_{R_{\p}} N_{\p}=\8$ and therefore that 
$\Tor^{R_{\p}}_1 (N_{\p},{}^{f^n}\!\! R_{\p})\neq
0$. On the other hand, since $\p$ is minimal in the infinite
projective dimension locus of $M$, $\pd_{R_{\q}}M_{\q} < \8$ for
any prime $\q \subsetneq \p$, whence $\pd_{R_{\q}}N_{\q} < \8$ and
so $\Tor^{R_{\q}}_1 (N_{\q},{}^{f^n}\!\! R_{\q})=0$. Therefore, the
length of $\Tor^{R_{\p}}_1 (N_{\p},{}^{f^n}\!\! R_{\p})$ must be
finite.

Localizing (\ref{basic}) at $\p$, we have an exact sequence
\[0 \to \Tor^{R_{\p}}_1 (N_{\p},{}^{f^n}\!\! R_{\p}) \to F^n(M)_{\p}.
\]
This implies that $\depth F^n(M)_{\p}=0$. Hence $\p \in  \ass F^n(M)$.
\end{proof}

\begin{cor}
\label{perfect} Let $R$ be  a Gorenstein local ring  such that
${}^{f^n}\!\!R$ is locally strongly rigid for some $n>0$ and $M$
an $R$-module. The following are equivalent:
\begin{enumerate}
   \item $M$ has finite
projective dimension;
       \item  $\ass F^n(M) \subseteq \FPD(M)$.
\end{enumerate}
\end{cor}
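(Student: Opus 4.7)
The plan is to prove the two implications separately, with $(1)\Rightarrow(2)$ being immediate and $(2)\Rightarrow(1)$ following from Theorem~\ref{mainthm} by a short contradiction argument.

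For $(1)\Rightarrow(2)$: if $\pd_R M<\infty$, then localization preserves finite projective dimension, so $\pd_{R_\p}M_\p<\infty$ for every prime $\p$. Hence $\FPD(M)=\Spec R$ and the containment $\ass F^n(M)\subseteq \FPD(M)$ is automatic.

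For $(2)\Rightarrow(1)$: I would argue by contradiction. Suppose $\pd_R M=\infty$, so that $\IPD(M)$ is a nonempty subset of $\Spec R$. Since $\IPD(M)$ is closed under specialization (its complement $\FPD(M)$ is open) and nonempty, it has a minimal element $\p \in \min\IPD(M)$. The hypothesis that ${}^{f^n}\!\!R$ is locally strongly rigid means precisely that $\RGD_n(R)=\Spec R$, so Theorem~\ref{mainthm} applies and yields
\[
\p \;\in\; \min \IPD(M) \;=\; \min \IPD(M)\cap \RGD_n(R) \;\subseteq\; \ass F^n(M).
\]
By hypothesis (2) we then get $\p\in\FPD(M)$, contradicting $\p\in\IPD(M)$. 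Therefore $\IPD(M)=\emptyset$, i.e., $M$ has finite projective dimension.

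There is no real obstacle here: the entire content is packaged into Theorem~\ref{mainthm}, and the corollary is essentially a repackaging together with the observation that $\IPD(M)$ is specialization-closed. The only mild care needed is to invoke the local strong rigidity hypothesis exactly so that the intersection $\min\IPD(M)\cap \RGD_n(R)$ in Theorem~\ref{mainthm} collapses to $\min\IPD(M)$ itself.
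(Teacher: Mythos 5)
Your proof is correct and fills in exactly the argument the paper leaves implicit (the corollary is stated without proof as an immediate consequence of Theorem~\ref{mainthm}): $(1)\Rightarrow(2)$ is trivial since $\FPD(M)=\Spec R$, and $(2)\Rightarrow(1)$ follows by taking a minimal element of the nonempty set $\IPD(M)$ and applying the theorem with $\RGD_n(R)=\Spec R$. One minor remark: you don't actually need $\IPD(M)$ to be specialization-closed to extract a minimal element---any nonempty subset of the Noetherian space $\Spec R$ has minimal elements, since a strictly descending chain of primes would give a strictly ascending chain of closed subsets.
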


As an immediate consequence, we obtain the following special case with
simpler hypotheses. Here, $\Sing (R)$ denotes the singular locus of $R$. Note particularly that the hypothesis
$(\min \Supp M) \cap \Sing(R)=\emptyset$ holds, for example, when $\dim M > \dim \Sing(R)$.

\begin{cor}
\label{perfect1}
Let $R$ be  a Gorenstein local ring  such that
${}^{f^n}\!\!R$ is locally strongly rigid for some $n>0$ (e.g., if $R$ is a local complete intersection) 
and $M$ an $R$-module such that  $(\min \Supp M) \cap \Sing(R)=\emptyset$. If $F^n(M)$ has no embedded primes, then $M$ has finite projective dimension. In particular, if $F^n(M)$ is Cohen-Macaulay, then $M$ is perfect.
\end{cor}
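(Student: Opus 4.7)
The plan is to reduce to Corollary~\ref{perfect}, so I need to show that the assumptions force $\ass F^n(M) \subseteq \FPD(M)$. The first step is to identify $\ass F^n(M)$ with (a subset of) $\min \Supp M$. Since Frobenius on a local ring is faithful, Fact~\ref{fact} gives $F^n(M)_\p = F^n(M_\p) \neq 0$ iff $M_\p \neq 0$, so $\Supp F^n(M) = \Supp M$, and in particular $\min \Supp F^n(M) = \min \Supp M$. The hypothesis that $F^n(M)$ has no embedded primes then yields $\ass F^n(M) = \min \Supp F^n(M) = \min \Supp M$.

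The second step is to observe that each $\p \in \min \Supp M$ lies in $\FPD(M)$. By hypothesis $\min \Supp M \cap \Sing(R) = \emptyset$, so $R_\p$ is regular for every such $\p$, and hence every finitely generated $R_\p$-module, in particular $M_\p$, has finite projective dimension. Combining the two steps, $\ass F^n(M) \subseteq \FPD(M)$, and Corollary~\ref{perfect} delivers $\pd_R M < \infty$.

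For the ``in particular'' clause, note that a Cohen--Macaulay module has no embedded primes, so the previous paragraph applies and $\pd_R M < \infty$. To promote this to perfection of $M$, I would recall that once $\pd_R M < \infty$, the theorem of Peskine--Szpiro implies $\Tor_i^R(M,{}^{f^n}\!\!R) = 0$ for all $i>0$, so $F^n$ preserves a finite projective resolution of $M$ and $\pd_R F^n(M) = \pd_R M$. Auslander--Buchsbaum then gives $\depth_R M = \depth R - \pd_R M = \depth_R F^n(M)$, while $\Supp F^n(M) = \Supp M$ yields $\dim F^n(M) = \dim M$. If $F^n(M)$ is Cohen--Macaulay these equal, so $\depth_R M = \dim M$, making $M$ Cohen--Macaulay of finite projective dimension over the (Cohen--Macaulay) ring $R$, i.e., perfect.

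No step looks genuinely difficult: the real content is already packaged in Corollary~\ref{perfect}. The only point that needs care is the equality $\Supp F^n(M) = \Supp M$, which is the invocation of Fact~\ref{fact} together with the faithfulness of Frobenius on a local ring; and for the perfection statement, the mildly subtle point that finite projective dimension is needed before one can safely equate $\pd_R M$ with $\pd_R F^n(M)$.
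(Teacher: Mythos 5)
Your proof is correct and follows essentially the same route as the paper's: identify $\ass F^n(M)$ with $\min\Supp F^n(M)=\min\Supp M$, use the singular-locus hypothesis to place these primes in $\FPD(M)$, and invoke Corollary~\ref{perfect}. The paper simply cites \cite{PS2} for $\Supp F^n(M)=\Supp M$ and leaves the ``perfect'' clause to the reader, whereas you supply the (correct) Peskine--Szpiro/Auslander--Buchsbaum argument explicitly.
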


\begin{proof}
It suffices to note that
\[
\ass F^n(M)=\min \Supp F^n(M)=\min \Supp M\subseteq \Spec R \backslash \Sing(R)
\subseteq \FPD(M)
\]
where the first equality is by the assumption that $F^n(M)$ has no embedded primes, the second is well-known (see \cite{PS2}, for example) and the first containment follows from the hypothesis.
\end{proof}

\begin{rmk}
If $R$ is reduced, we do not know if the condition ``$(\min \Supp
M) \cap \Sing(R)=\emptyset$'' in Corollary~\ref{perfect1} can be replaced by
the simpler condition ``$\dim M>0$''. However, this is impossible
when $R$ is not reduced (see \cite[2.1.7]{M2}, for example).
\end{rmk}

We now give an application of Theorem~\ref{mainthm} to divisor
class groups. In the sequel, we use $\Cl(R)$ to denote the divisor
class group of $R$. We refer to \cite{F} for the definition and
basic facts about $\Cl(R)$ and the Picard groups and to \cite{BH} for Serre's conditions
($R_n$) and ($S_n$).

\begin{thm}\label{class}
Let $R$ be a
Gorenstein local ring such that ${}^{f^n}\!\!R$ is locally
strongly rigid for some $n>0$.  Let $I$ be an reflexive ideal
 such that $I$ is locally free in codimension $2$. Furthermore, assume that $\Hom_R(I,I) \cong R$. 
Let $q=p^n$. Then if $I^{(q)}$
 satisfies Serre's condition $(S_3)$, $I$ must be principal. In particular, the Picard group of the
punctured spectrum of $R$ has no $p$-torsion. If, furthermore, $R$ satisfies
condition $(R_2)$, then $\Cl(R)$ has no $p$-torsion.
\end{thm}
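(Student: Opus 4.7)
The strategy is to apply Theorem~\ref{mainthm} to $M = I$ and use the $(S_3)$ hypothesis on $I^{(q)} := (I^{[q]})^{**}$ to force $I$ to have finite projective dimension, and then to deduce principality from the remaining hypotheses.

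First, since ${}^{f^n}\!R$ is locally strongly rigid, Theorem~\ref{mainthm} gives $\min \IPD(I) \subseteq \ass F^n(I)$. Because $I$ is locally free in codimension $2$, every prime in $\IPD(I)$ has height at least $3$; so the task reduces to showing that $\ass F^n(I)$ contains no prime of height $\geq 3$.

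The main technical step uses two short exact sequences: applying $F^n$ to $0 \to I \to R \to R/I \to 0$ yields
\[
0 \to T \to F^n(I) \to I^{[q]} \to 0, \qquad T := \Tor_1^R(R/I,\,{}^{f^n}\!R),
\]
while the natural embedding of $I^{[q]}$ into its reflexive hull gives
\[
0 \to I^{[q]} \to I^{(q)} \to Q \to 0.
\]
Both $T$ and $Q$ are supported in codimension $\geq 3$: $T$ by the Peskine--Szpiro--Herzog criterion (it vanishes wherever $I$ is locally free, in particular in codimension $2$), and $Q$ because $I^{[q]}$ is already reflexive wherever it is locally principal. Using the $(S_3)$ bound $\depth_\p I^{(q)} \geq 3$ at primes $\p$ of height $\geq 3$, together with the codimension-$\geq 3$ supports of $T$ and $Q$, a careful argument via the local-cohomology long exact sequences of the two displays should yield $\depth_\p F^n(I) \geq 1$ at every such $\p$, contradicting the existence of any height-$\geq 3$ associated prime. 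This depth analysis is where I expect the real work to lie: a one-step application of the depth lemma to the first sequence only recovers $\depth_\p F^n(I) \geq 0$, and the actual content is to simultaneously exploit the codimensions of both $T$ and $Q$ (and the reflexivity of $I^{(q)}$) to rule out any torsion contribution from $T_\p$ at height-$\geq 3$ primes.

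Once $\min \IPD(I) = \emptyset$, the openness of $\FPD$ in $\Spec R$ gives $\pd_R I < \infty$. Combined with reflexivity of $I$, codimension-$2$ local freeness, and $\Hom_R(I,I) \cong R$, this should force $I$ to be a free rank-one $R$-module, hence principal: the hypothesis $\Hom_R(I,I) \cong R$ together with codimension-$2$ local freeness makes the trace map $I \otimes_R \Hom_R(I,R) \to R$ an isomorphism outside a codimension-$\geq 3$ locus, which combined with finite projective dimension identifies $I$ with an invertible ideal, and over the local ring $R$ an invertible ideal is free. The Picard- and class-group corollaries then follow by specializing to $p^n$-torsion classes, each of which is represented by a reflexive ideal $I$ with $I^{(q)} \cong R$---trivially $(S_3)$ since $R$ is Cohen--Macaulay---noting that $\Hom_R(I,I) \cong R$ is automatic when $I$ is locally free in codimension $2$ (so $\Hom_R(I,I)$ is reflexive and agrees with $R$ in codimension $2$), and that $(R_2)$ ensures that reflexive ideals are locally free in codimension $2$.
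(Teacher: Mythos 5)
Your overall plan is sound—apply Theorem~\ref{mainthm}, use the $(S_3)$ hypothesis on $I^{(q)}$ to rule out associated primes of height $\geq 3$ for the relevant Frobenius image, and conclude $\pd I<\infty$, hence $I$ principal. But the key technical step, which you correctly flag as ``where I expect the real work to lie,'' does not in fact go through, and the reason is structural rather than a matter of a more careful local cohomology computation.

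The problem is that you apply Theorem~\ref{mainthm} to $M=I$, so the module whose associated primes you must analyze is $F^n(I)$, and your first display
\[
0\to T\to F^n(I)\to I^{[q]}\to 0,\qquad T=\Tor_1^R(R/I,{}^{f^n}\!R),
\]
contains a finite-length submodule $T_\p\subseteq F^n(I)_\p$ at any $\p\in\min\IPD(I)$: strong rigidity of ${}^{f^n}\!R_\p$ forces $T_\p\neq 0$ precisely because $\pd_{R_\p}I_\p=\infty$, and $T_\p$ has finite length because $\p$ is minimal in $\IPD(I)$. Then $\depth F^n(I)_\p=0$ is automatic (this is exactly the content of Theorem~\ref{mainthm}), and the two exact sequences you write down are entirely consistent with this: in the case $\height\p=3$, the second sequence together with $(S_3)$ only gives $\depth I^{[q]}_\p\geq 1$, and the first sequence then gives no contradiction at all, since $T_\p$ sits as a depth-zero submodule. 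In other words, to ``rule out any torsion contribution from $T_\p$'' you would need to know $T_\p=0$, which by strong rigidity is equivalent to $\pd_{R_\p}I_\p<\infty$---i.e., the very thing you are trying to prove. The argument is circular at that point, and no amount of local cohomology bookkeeping on those two sequences can escape this, because the $(S_3)$ hypothesis on $I^{(q)}$ simply does not constrain $T$.

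What the paper does instead is choose a different test module. It first observes that any such reflexive ideal can be written as a colon ideal $I=(a):(b)$, and then applies Theorem~\ref{mainthm} to the \emph{cyclic} module $R/(a,b)$ rather than to $I$ (using $\IPD(I)=\IPD(R/(a,b))$ via the sequence $0\to R/I\xrightarrow{b}R/(a)\to R/(a,b)\to 0$). This is a real gain: $F^n(R/(a,b))=R/(a^q,b^q)$ with no extraneous $\Tor_1$ term, and the exact sequence
\[
0\to R/(a^q:b^q)\xrightarrow{\ b^q\ } R/(a^q)\to R/(a^q,b^q)\to 0,\qquad (a^q:b^q)=I^{(q)},
\]
relates $\depth(R/(a^q,b^q))_\p=0$ directly to $I^{(q)}$. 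A two-step depth-lemma argument then yields $\depth I^{(q)}_\p\leq 2$, which contradicts $(S_3)$ since $\height\p\geq 3$. The colon presentation is thus the missing ingredient: it removes the torsion obstruction by routing the Frobenius computation through cyclic modules, and it ties $I^{(q)}$ to the Frobenius image by a clean equality of ideals rather than through $I^{[q]}$ and a cokernel $Q$. The concluding reductions in your last paragraph (openness of $\FPD$, $\Hom_R(I,I)\cong R$ automatic from codimension-$2$ freeness and reflexivity, $(R_2)$ giving codimension-$2$ local freeness) are essentially fine, though the paper obtains ``$\pd I<\infty\Rightarrow I$ principal'' more directly from Braun's theorem rather than via the trace map argument you sketch.
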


\begin{proof}
We may assume $\dim R\geq 3$. Assume that $I$ is not principal, then it follows that $\pd
I=\8$ (see \cite[Corollary 11]{Bra}\cite[Chapter VII, \textsection 4, no. 7, Corollary
2]{Bo}). We claim that one can always write $I=(a):(b)$ for $a,b
\in R$. Here is a quick proof: choose $a$ such that $a$ generates 
$I$ at the minimal primes of $I$. Pick an irredundant primary
decomposition of $(a)$; it can be written as $I\cap J$ (if $I=(a)$
we are done). Choosing $b$ in $J$ but not in any minimal prime
of $I$, one can show that $I = (a):(b)$. By the short exact
sequence
\[
0 \to R/(a:b) \stackrel{b}{\to} R/(a) \to R/(a,b) \to 0
\]
we have $\IPD(I)=\IPD(R/(a,b))$.
Thus, for any $\p \in \min \IPD(I)$,
\[
\p \in \min \IPD(R/(a,b))
\]
and so by Theorem~\ref{mainthm},
\[
\p \in \ass (F^n(R/(a,b)))= \ass (R/(a^q, b^q))
\]
Localize the short exact sequence
\[
0 \to R/(a^q:b^q) \to R/(a^q) \to R/(a^q,b^q) \to 0
\]
at $\p$, and observe that  $(a^q:b^q)=I^{(q)}$. From the fact that
\[
\depth (R/(a^q,b^q))_{\p}=0
\]
we get that $\depth I^{(q)}_{\p} \leq 2$. On the other hand, since $I$ is locally free in codimension $2$, $\dim R_{\p}\geq 3$. So, $I^{(q)}$ does not satisfy $(S_3)$, and our first assertion is proved. The last two statements follow immediately (note that if $R$ is $(R_2)$ then $R$ is automatically normal). 
\end{proof}

As a corollary we can recover a notable result about torsion elements in the Picard groups of  complete intersections.

\begin{cor}
\label{classgroup}
Let $R$ be an equicharacteristic local complete intersection ring of dimension at least $3$. Then the Picard group of the punctured spectrum of $R$ is torsion-free.  If, furthermore, $R$ satisfies condition $(R_2)$, then the class group of $R$ is torsion-free.

Let $X$ be a complete intersection variety of dimension at least $2$ in the projective space over a field. The Picard group of $X$ modulo the hyperplane section is torsion-free.
\end{cor}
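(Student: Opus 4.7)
The plan is to deduce all three assertions from Theorem~\ref{class}, treating the positive characteristic case by direct application and reducing the characteristic zero case by spreading out. For the first assertion in characteristic $p>0$, note that a local complete intersection satisfies the hypothesis of Theorem~\ref{class} for every $n\geq 1$ (Example~\ref{eg1}: ${}^{f^n}\!\! R$ is locally strongly rigid by Theorem~\ref{AMD}). Thus Theorem~\ref{class} rules out $p$-torsion in $\Pic(U_R)$. For $\ell$-torsion with $\ell\neq p$, I would either invoke the classical Grothendieck--Lefschetz/Deligne torsion-freeness for complete intersections or reduce to the projective statement (the third assertion) via a cone construction.

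For equicharacteristic zero, I would spread out $R$ to a finitely generated $\mathbb{Z}$-algebra, reduce modulo a prime $p$ chosen to preserve the complete intersection property and the specialization of a presumed torsion class, and then apply the characteristic $p$ case. Concretely, a hypothetical torsion class of order $\ell$ in $\Pic(U_R)$ specializes, after reducing modulo $p=\ell$ (outside a finite bad locus), to a nontrivial $\ell$-torsion class in characteristic $\ell$, contradicting Theorem~\ref{class} applied with $p=\ell$.

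The second assertion follows at once from the first: a Gorenstein ring is automatically $(S_2)$, so adding $(R_2)$ forces normality, and for normal local rings of dimension at least two one has $\Cl(R)=\Pic(U_R)$, so torsion-freeness transfers. For the third assertion, take $R$ to be the localization of the affine cone over $X$ at its vertex; this is a local complete intersection of dimension $\dim X+1\geq 3$. The classical cone--class-group exact sequence embeds $\Pic(X)/\mathbb{Z}\cdot[\mathcal{O}_X(1)]$ into $\Cl(R)$, and the preceding assertions finish the argument.

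The principal obstacle is ruling out $\ell$-torsion for $\ell\neq p$ in positive characteristic: this does not follow from Theorem~\ref{class} in isolation, and requires either classical geometric input (Lefschetz-type theorems) or a careful descent argument. The remaining steps are essentially formal once Theorem~\ref{class} is available.
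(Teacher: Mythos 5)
Your proposal correctly identifies the structure of the argument and honestly flags its own principal gap, namely the torsion of order prime to $p$ (which in equal characteristic zero is all of the torsion). The paper closes this gap by citing Robbiano \cite{Ro}, who proved that for complete intersections (in suitable projective varieties, and by the cone construction for the corresponding local rings) the Picard group has no torsion of order prime to the characteristic exponent $p$; in characteristic zero $p=1$ and this statement already kills all torsion, so no reduction-mod-$p$ or spreading out is needed. Theorem~\ref{class} is then used only for the $p$-torsion, exactly as you do.

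Your two proposed fixes both have problems. Invoking Deligne's torsion-freeness of $\Pic$ for complete intersections would defeat the stated purpose of the corollary (the paper is giving a new algebraic proof of precisely that result) and, via your cone reduction, risks being circular with the third assertion. The spreading-out argument also has unaddressed technical gaps: you must arrange that the specialization map on Picard groups is injective on torsion and that the prime $\ell$ equal to the torsion order actually lies in the good locus of the spread-out family; neither is automatic, and the argument as sketched does not establish either. Finally, the step ``$(R_2)$ forces normality and for normal local rings $\Cl(R)=\Pic(U_R)$'' is not correct in general -- equality of $\Cl(R)$ and $\Pic(U_R)$ requires $U_R$ to be locally factorial, which $(R_2)$ alone does not give. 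This does not hurt the corollary because the $\Cl(R)$ statement is already built directly into Theorem~\ref{class} under $(R_2)$ (the hypothesis there ensures that every divisorial ideal is locally free in codimension $2$), so you should cite that rather than the Picard/class-group identification. The cone reduction for the third assertion matches the paper.
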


\begin{proof}
Let $p$ be the characteristic exponent of $R$ (so it is $1$ if the characteristic of $R$ is $0$).
The fact that the Picard group or $\Cl(R)$ has no element whose order is relatively prime to $p$ was well-known, see  \cite{Ro}. Theorem~\ref{class} takes care of the $p$-torsion elements. The second half of the corollary follows by applying the first to the local ring at the origin of the affine cone over $X$.
\end{proof}

\begin{rmk}
The second half of the corollary was first proved by Deligne
\cite{De}, and another proof was given by B\v{a}descu \cite[Theorem
B]{Ba}. As far as we know, ours is the first algebraic proof.
\end{rmk}

\begin{eg}
The conditions $\dim R \geq 3$  and ($R_2$) in the corollary cannot 
be weakened. Let $R = k[[x,y,z]]/(xy-z^2)$ where $k$ is an algebraically closed field of characteristic other than $2$. Then $\dim R=2$ and
$R$ is regular in codimension $1$, but $\Cl(R) \cong
\mathbb{Z}/(2)$ (see, for example, Proposition 11.4 of \cite{F}).
\end{eg}

%%%%%%%%%%%%%%%%%%%%%%%%%%%%%%%%%%%%%%%%%%%%%%%%%%
\section{Examples of nonrigidity}\label{example}
%%%%%%%%%%%%%%%%%%%%%%%%%%%%%%%%%%%%%%%%%%%%%%%%%%

In this section we construct plenty of examples of a Gorenstein
ring $R$ in positive characteristic such that ${}^{f^n}\!\! R$ is
not (strongly) rigid. This is in stark contrast to the situation
for complete intersection rings, where the strong rigidity of
${}^{f^n}\!\! R$ is known to hold. Our constructions take two
completely different approaches. The first approach (see
Example~\ref{veronese}) provides the desired examples with torsion
divisor class groups. This can be viewed as a natural consequence
of Theorem~\ref{class}. The second approach (see
Example~\ref{counterexamplethm}), on the contrary, provides the
desired examples with torsion-free divisor class groups.

First we isolate a consequence of  Theorem~\ref{class}:

\begin{cor} 
\label{torsionex} 
Let $R$ be a local, Gorenstein domain with isolated singularity. Suppose 
that $\dim R \geq 3$ and $\Cl(R)$ has a torsion element of order $l$ that 
satisfies ($S_3$). Then  ${}^{f^n}\!\!R$ is not strongly rigid for any $n$ such 
that $p^n \equiv 1$ or $0$ modulo $l$. In particular, if $l=2$, then ${}^{f^n}\!\!R$ 
is not strongly rigid for any $n$ and not rigid for $n\gg 0$.
\end{cor}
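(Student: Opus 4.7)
The plan is to apply Theorem~\ref{class} as a contrapositive obstruction. First I would let $I$ be a reflexive fractional ideal of $R$ representing the given torsion class $[I]\in\Cl(R)$ of order $l$, chosen so that $I$ itself satisfies Serre's condition $(S_3)$. Since $R$ is a Gorenstein domain of dimension at least $3$ with isolated singularity, it is regular in codimension one and hence normal; moreover $I$ is locally free at every nonmaximal prime, so in particular in codimension $2$. Normality also yields $\Hom_R(I,I)\cong R$: any $r\in\operatorname{Frac}(R)$ with $rI\subseteq I$ is integral over $R$ by Cayley--Hamilton applied to the finitely generated $R$-module $I$, hence lies in $R$. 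Thus $I$ meets all the ring-theoretic hypotheses of Theorem~\ref{class}.

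The key computation would be to verify the $(S_3)$ condition on $I^{(q)}$ for $q=p^n$, using the hypothesis $q\equiv 0$ or $1\pmod{l}$. In $\Cl(R)$ one has $[I^{(q)}]=q\cdot[I]$. If $q\equiv 1\pmod{l}$, then $[I^{(q)}]=[I]$, so $I^{(q)}\cong I$ as reflexive $R$-modules and $I^{(q)}$ inherits $(S_3)$ from $I$. If $q\equiv 0\pmod{l}$, then $[I^{(q)}]=0$, so $I^{(q)}\cong R$, which is Cohen--Macaulay of dimension $\geq 3$ and hence satisfies $(S_3)$. Either way, Theorem~\ref{class} would force $I$ to be principal if ${}^{f^n}\!\!R$ were locally strongly rigid; but $[I]$ has order $l\geq 2$, so $I$ is non-principal, and therefore ${}^{f^n}\!\!R$ must fail to be locally strongly rigid.

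To conclude I would pass from ``not locally strongly rigid'' back to ``not strongly rigid.'' Under the isolated singularity hypothesis these two notions coincide: for each prime $\p\neq\m$ the localization $R_\p$ is regular, so every $R_\p$-module has finite projective dimension and ${}^{f^n}\!\!R_\p$ is strongly rigid vacuously; hence failure must occur at $\m$ itself, giving that ${}^{f^n}\!\!R$ is not strongly rigid. For the last clause with $l=2$, every $p^n$ is congruent to $0$ or $1$ modulo $2$, so the preceding applies for all $n\geq 1$; and for $n\geq c(R)$ the Koh--Lee bound recalled in the introduction upgrades rigidity to strong rigidity, so failure of strong rigidity forces failure of rigidity for $n\gg 0$. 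I do not foresee a genuine obstacle in this argument: the substantive work is already packaged in Theorem~\ref{class}, and the only moderately delicate step is the passage from local strong rigidity to strong rigidity, which the isolated singularity assumption handles cleanly.
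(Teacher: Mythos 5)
Your argument is correct and follows the same route as the paper: represent the torsion class by a reflexive ideal $I$, observe that $I^{(q)} \cong I$ or $R$ (according as $q\equiv 1$ or $0 \bmod l$) so that $I^{(q)}$ is $(S_3)$, and invoke Theorem~\ref{class} to contradict non-principality; the $l=2$ clause and the Koh--Lee upgrade to non-rigidity for $n\gg 0$ are handled just as in the paper. Your extra verifications (that isolated singularity gives local freeness in codimension $2$, that normality gives $\Hom_R(I,I)\cong R$, and that local strong rigidity reduces to strong rigidity at $\m$) merely make explicit what the paper leaves implicit.
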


\begin{proof} 
Let $I$ be a reflexive ideal which represents an $l$-torsion 
element in $\Cl(R)$ and $q=p^n$.  Then the ideal $J = I^{(q)}$ is isomorphic 
to $I$ or $R$, both of which
satisfy $(S_3)$, contradicting Theorem~\ref{class}. When $l=2$,
for any $n$, $q = p^n$ is congruent to $0$ or $1$ modulo $2$. The
last statement follows from Example~\ref{eg1}.
\end{proof}

\begin{eg}
\label{veronese}
It is not hard to find examples of isolated Gorenstein singularities with torsion 
class group. Let $S= k[x_1,\cdots,x_d]$ and $l$ be an integer. Let $T$ be 
the $l$-Veronese subring of $S$ and $R$ be the local ring at the homogeneous 
maximal ideal of $T$. Then one can show that $\Cl(R) =\Cl(T) = \mathbb{Z}/(l)$ 
using Theorem 1.6 of \cite{Wa}. The ring $R$  obviously  has isolated singularity, 
as it is the local ring at the origin of the cone over a smooth projective variety. 
Also, $R$ will be Gorenstein as long as $l \mid d$. Finally, let $I$ represent 
the generator of $\Cl(T)$. It is easy to see that the cyclic cover of $T$ corresponding 
to $I$ is $S$, so $I$, and therefore the generator of $\Cl(R)$, is Cohen-Macaulay. 
In particular, it will be ($S_3$). So all of the conditions of Corollary~\ref{torsionex} can be 
satisfied easily.
\end{eg}

For the rest of this section we will take another approach to
construct examples of nonrigidity in which the rings have 
{\it torsion-free} divisor class groups. The
following result gives a general technique for finding such
examples:

\begin{lemma}
\label{firstlemma}
Let $(R,\m)$ be a Gorenstein ring with isolated
singularity and positive dimension. The following are equivalent:
\begin{enumerate}
   \item ${}^{f^n}\!\!R$ is strongly rigid.
   \item  For any $R$-module $L$ with infinite
projective dimension,  $\depth F^n(L)=0$
\end{enumerate}

\end{lemma}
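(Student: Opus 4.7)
The plan is to prove both implications using the hypotheses on $R$ in complementary ways: for $(1)\Rightarrow(2)$ the Cohen--Macaulay approximation of Auslander--Buchweitz (already invoked in Theorem~\ref{mainthm}) plus the isolated singularity hypothesis to cut a Tor module down to finite length; for $(2)\Rightarrow(1)$ a syzygy argument using positive depth of $R$ to show that a certain Frobenius pullback embeds into a free module.

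For $(1)\Rightarrow(2)$, let $L$ be an $R$-module with $\pd_R L=\infty$. By the Cohen--Macaulay approximation \cite[1.8]{AB} there is a short exact sequence $0\to L\to Q\to N\to 0$ with $\pd_R Q<\infty$ and $N$ maximal Cohen--Macaulay; consequently $\pd N=\infty$. Tensoring with ${}^{f^n}\!R$ and killing the vanishing Tors of $Q$ via Peskine--Szpiro yields an embedding $\Tor_1^R(N,{}^{f^n}\!R)\hookrightarrow F^n(L)$, just as in the proof of Theorem~\ref{mainthm}. Strong rigidity of ${}^{f^n}\!R$, applied to $N$, forces $\Tor_1^R(N,{}^{f^n}\!R)\neq 0$. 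Since $R$ has isolated singularity, $R_\p$ is regular for every $\p\neq\m$, so by Kunz's theorem $\Tor_1^{R_\p}(N_\p,{}^{f^n}\!R_\p)=0$. Hence $\Tor_1^R(N,{}^{f^n}\!R)$ is a nonzero module of finite length, so $\m$ is associated to it, and via the embedding $\m\in\ass F^n(L)$, giving $\depth F^n(L)=0$.

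For $(2)\Rightarrow(1)$, suppose $\Tor_i^R(M,{}^{f^n}\!R)=0$ for some $i\geq 1$, and let $K=\syz^i M$ be the $i$-th syzygy in a free resolution of $M$. From the short exact sequence $0\to K\to F\to \syz^{i-1}M\to 0$ with $F$ free, the long exact sequence of Tor yields $0\to \Tor_i^R(M,{}^{f^n}\!R)\to F^n(K)\to F^n(F)$, so the vanishing hypothesis embeds $F^n(K)$ into the free module $F^n(F)$. Since $R$ is Cohen--Macaulay of positive dimension it contains a nonzerodivisor, which remains a nonzerodivisor on $F^n(F)$ and thus on $F^n(K)$, so $\depth F^n(K)\geq 1$. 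If $\pd M=\infty$ then $\pd K=\infty$ and $K\neq 0$, so $F^n(K)\neq 0$ (Frobenius base change preserves the minimal number of generators); then (2) would force $\depth F^n(K)=0$, a contradiction. Thus $\pd M<\infty$, proving strong rigidity.

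The subtle point, as I see it, lies in the first direction: strong rigidity must be applied to the Cohen--Macaulay replacement $N$ rather than to $L$ directly, and the isolated singularity hypothesis is precisely what guarantees that the resulting Tor has support only at $\m$, hence finite length, hence depth zero. The second direction is mostly formal bookkeeping once one spots the embedding of $F^n(K)$ into a free module; the key structural input there is merely that $\depth R\geq 1$.
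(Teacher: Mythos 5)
Your proof is correct and follows essentially the same approach as the paper's. In $(1)\Rightarrow(2)$ you unpack the argument that the paper outsources to Corollary~\ref{perfect} (Cohen--Macaulay approximation, the embedding $\Tor_1^R(N,{}^{f^n}\!R)\hookrightarrow F^n(L)$, and the isolated-singularity reduction to finite length); in $(2)\Rightarrow(1)$ you reach the same depth contradiction by embedding $F^n(K)$ into a free module, where the paper instead tensors the syzygy sequence to get $0\to F^n(L_1)\to Q\to F^n(L)\to 0$ and applies the depth lemma to all three terms --- two equivalent packagings of the same observation.
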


\begin{proof}[Proof of Lemma~\ref{firstlemma}]
Assume (1). Then (2) is a consequence of Corollary~\ref{perfect}.
Now assume (2). Let $L$ be a module of infinite projective
dimension. It is enough to prove that $\Tor_1^R(L,{}^{f^n}\!\!R)
\neq 0$. Consider the exact sequence
\[
0 \to L_1 \to Q \to L \to 0
\]
where $Q$ is free and $L_1$ is the first syzygy of $L$. If $\Tor_1^R(L,{}^{f^n}\!\!R) = 0$, then by tensoring with ${}^{f^n}\!\!R$
one get
\[
0 \to F^n(L_1) \to Q \to F^n(L) \to 0.
\]
But since $\pd_R L= \pd_R L_1 = \infty$, one has $\depth F^n(L_1) = \depth F^n(L) =0$. 
Since $\depth Q =\dim Q >0$, this is a contradiction.
\end{proof}

We also need the following crucial observation.

\begin{lemma}
\label{secondlemma} 
Let $k$ be a field of characteristic $p>0$. Let
$A$ denote the determinantal ring $k[X]/I_2$ where $X=(X_{ij})$
is a 3 $\times$ 3 matrix of indeterminates and $I_2$ is the ideal of
$k[X]$ generated by all the 2 $\times$ 2 minors of $X$. Let $x_{ij}$
denote the images of $X_{ij}$ in $A$. Let
$L=A/(x_{11},x_{12})$. Then $\depth F^n(L)>0$ for all $n>0$ and $\pd L = \8$.
\end{lemma}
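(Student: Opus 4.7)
The plan exploits the Segre parametrization $A \hookrightarrow R := k[u_1, u_2, u_3, v_1, v_2, v_3]$, $x_{ij} \mapsto u_i v_j$, together with the induced bigrading on $R$ (with $u_i \in R_{1,0}$ and $v_j \in R_{0,1}$). Under this identification $A$ is the diagonal subring $\bigoplus_d R_{d,d}$, and a direct bigraded-component computation yields the key contraction identity $(x_{11}^q, x_{12}^q) R \cap A = (x_{11}^q, x_{12}^q) A$ for each $q = p^n$.

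For $\depth F^n(L) > 0$, I would show that $x_{33}$ is a nonzerodivisor on $F^n(L) = A/(x_{11}^q, x_{12}^q)$. Suppose $a \in A$ satisfies $a x_{33} \in (x_{11}^q, x_{12}^q) A$. In $R$ this reads $a u_3 v_3 \in u_1^q (v_1^q, v_2^q) R$; since $u_1$ is coprime to $u_3 v_3$ in the UFD $R$, we get $u_1^q \mid a$. Writing $a = u_1^q a'$, one finds $a' u_3 v_3 \in (v_1^q, v_2^q) R$, and because $u_3 v_3$ is a nonzerodivisor modulo $(v_1^q, v_2^q) R$, we conclude $a' \in (v_1^q, v_2^q) R$ and hence $a \in u_1^q(v_1^q, v_2^q) R \cap A = (x_{11}^q, x_{12}^q) A$.

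For $\pd L = \infty$, I would invoke the Peskine--Szpiro/Herzog criterion recalled in the introduction by exhibiting a nonvanishing $\Tor_1^A(L, {}^{f}\!A)$. A pair $(a, b) \in A^2$ with $a x_{11} + b x_{12} = 0$ satisfies $u_1(a v_1 + b v_2) = 0$ in $R$, so $(a, b) = (c v_2, -c v_1)$ for a unique $c \in R$ (using that $u_1$ is a nonzerodivisor and $(v_1, v_2)$ is a regular sequence in $R$); requiring $(a, b) \in A^2$ forces $c \in \bigoplus_{d \geq 1} R_{d, d-1}$. Since any monomial of $R_{d, d-1}$ with $d \geq 1$ contains a $u_i$ factor, this $A$-module is generated by $R_{1,0} = k u_1 + k u_2 + k u_3$. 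Hence $\syz_1(L)$ is generated by $s_i := (x_{i2}, -x_{i1})$, $i = 1, 2, 3$, giving a free presentation $A^3 \xrightarrow{\partial_2} A^2 \xrightarrow{(x_{11}, x_{12})} A \to L \to 0$ whose $\partial_2$ has columns $s_1, s_2, s_3$.

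Applying Frobenius, $\Tor_1^A(L, {}^{f^n}\!A)$ is the module of syzygies of $(x_{11}^q, x_{12}^q)$ modulo $\im \partial_2^{[q]}$. The element $\sigma := (x_{12} x_{22}^{q-1}, -x_{11} x_{21}^{q-1})$ is such a syzygy, by the chain $x_{11}^q \cdot x_{12} x_{22}^{q-1} = x_{11} x_{12} (x_{11} x_{22})^{q-1} = x_{11} x_{12} (x_{12} x_{21})^{q-1} = x_{12}^q \cdot x_{11} x_{21}^{q-1}$. If $\sigma$ were in $\im \partial_2^{[q]}$, the Segre parametrization (after cancelling $v_2^q$) would require a relation $u_1 u_2^{q-1} = \alpha u_1^q + \beta u_2^q + \gamma u_3^q$ in $R$ with $\alpha, \beta, \gamma \in A$; the bigrading forces $\alpha, \beta, \gamma \in A_0 = k$, but the monomials $u_1 u_2^{q-1}, u_1^q, u_2^q, u_3^q$ are $k$-linearly independent for $q \geq 2$. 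Therefore $\Tor_1^A(L, {}^{f}\!A) \neq 0$ and $\pd L = \infty$. The main obstacle is identifying $\syz_1(L)$ explicitly, which requires the Segre bigrading; once this is in hand, the Frobenius obstruction reduces to linear independence of monomials in $k[u_1, u_2, u_3]$.
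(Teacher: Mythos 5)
Your proposal is correct and takes a genuinely different route from the paper on both halves of the lemma.

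For the depth claim, the paper works directly in $k[X]$: it fixes a reverse lexicographic order, runs Buchberger's algorithm to produce a Gr\"obner basis for the ideal generated by the $2\times 2$ minors together with $X_{11}^q, X_{12}^q$, observes that no element of the initial ideal is divisible by $X_{33}$, and concludes via the division algorithm that $x_{33}$ is a nonzerodivisor on $A/(x_{11}^q,x_{12}^q)$. You instead pass to the Segre parametrization $x_{ij}\mapsto u_iv_j$ and use the UFD structure of $R=k[u_i,v_j]$ plus the bigrading to show the same fact; your contraction identity $(x_{11}^q,x_{12}^q)R\cap A=(x_{11}^q,x_{12}^q)A$ is precisely what replaces the Gr\"obner bookkeeping. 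Both arguments are characteristic-free. Your route avoids the unstated verification that Buchberger's algorithm really produces the claimed basis (a nontrivial check that the paper leaves implicit), at the cost of having to argue the contraction --- which, as you note, is a straightforward bigraded-component computation.

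For $\pd L=\infty$, the paper is considerably shorter: it invokes MacRae's theorem that a two-generated ideal of finite projective dimension over a Noetherian local ring has the form $a(b,c)$ with $a$ a nonzerodivisor and $b,c$ a regular sequence, and rules this out for $(x_{11},x_{12})$ by a degree argument together with the relation $x_{11}x_{22}=x_{12}x_{21}$. You instead compute $\syz_1(L)$ explicitly via the Segre bigrading, identify the generators $s_i=(x_{i2},-x_{i1})$, and exhibit a nontrivial class $\sigma=(x_{12}x_{22}^{q-1},-x_{11}x_{21}^{q-1})$ in $\Tor_1^A(L,{}^{f^n}\!A)$, killing it by the Peskine--Szpiro/Herzog criterion. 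Your argument is longer, but it yields strictly more: an explicit nonzero Frobenius Tor class, whereas MacRae only certifies $\pd L=\infty$ abstractly. (Indeed, the paper's Remark following Example~\ref{counterexamplethm} asserts that a further computation shows $\Tor_2^R(N,{}^{f^n}\!R)\neq 0$ for the Auslander--Buchweitz cover $N$; your syzygy description is the natural starting point for that computation, so your route actually anticipates what the paper defers.) One small slip: you write $\Tor_1^A(L,{}^{f}\!A)$ at the end where you mean $\Tor_1^A(L,{}^{f^n}\!A)$; the linear-independence step requires only $q=p^n\geq 2$, so the argument is valid for every $n\geq 1$.
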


\begin{proof}
Let $\delta_{ij}$ denote the minors of $X$ corresponding to
$X_{ij}$ and $I$ be the ideal of $k[X]$ generated by
$X_{11}^n,X_{12}^n$ and all the $\delta_{ij}$'s. We prove that for
any field $k$ (we do not need to assume that $k$ has prime
characteristic!) and any $n\geq 2$, $x_{33}$ is a nonzerodivisor for $A/(x_{11}^n,x_{12}^n) \cong k[X]/I$. In the
following paragraph, we refer the reader to \cite[15.2-4]{E} for notations
and terminologies regarding Gr\"{o}bner basis.

We fix a \emph{reverse lexicographic order} $>$ on the monomials
such that
\[X_{11}>X_{12}>X_{13}>X_{21}>X_{22}>X_{23}>X_{31}>X_{32}>X_{33}\]
Using \emph{Buchberger's Algorithm}, one can produce a Gr\"{o}bner
basis for $I$ consisting of all the $\delta_{ij}$'s, $X_{11}^n$,
$X_{12}^n$ and all the monomials of the form
$X_{11}^lX_{12}^{n-l}X_{22}^sX_{32}^t$ where $l$ runs through
$1,2,...,n-1$ and $s,t$ run through all positive integers such
that $s+t=l$. Therefore the \emph{initial ideal} of $I$ (henceforth $\ini(I)$)
does not contain any monomial divisible by $X_{33}$. Assume for some
$g \in k[X]$, $X_{33}g\in I$. Let $g_0$ be the \emph{remainder} of
$g$ (with respect to the generators of $I$) in a \emph{standard
expression} obtained by performing the \emph{division algorithm}.
If $g_0 \neq 0$, then $X_{33}g_0 \neq 0$ since $k[X]$ is a domain.
On the other hand, since $X_{33}g_0\in I$, at least one of the
monomials of $X_{33}g_0$ is in $\ini(I)$. Thus, at least one of
the monomials of $g_0$ is in $\ini(I)$. This contradicts the fact
that $g_0$ is a nonzero \emph{remainder}. Thus $g_0=0$ and $g \in
I$. It follows that $x_{33}$ is a nonzerodivisor for $k[X]/I$.

Finally, we show $\pd L = \8$. Assume that $\pd L < \8$, i.e., the
ideal $(x_{11},x_{12})$ is of finite projective dimension. By a
result of MacRae \cite[Cor.\ 4.4]{MA}, two-generated ideals of
finite projective dimension have the form $a(b,c)$, where $a$ is a
nonzerodivisor and $b,c$ form a regular sequence. But if
$(x_{11},x_{12})=a(b,c)$ for such $a$, $b$ and $c$, since the degree of $x_{11}$ is one, $a$
is forced to be a unit (otherwise, $(x_{11}, x_{12})$ would be a principal ideal which is impossible).
Therefore $(x_{11},x_{12})=(b,c)$. But since
$x_{11}x_{22}-x_{21}x_{12}=0$, $(x_{11},x_{12})$ cannot be an
ideal generated by a regular sequence of two elements. This is a contradiction.
\end{proof}

Combining the two lemmas above, we get the following example. Note
that the divisor class group of the ring in this example is
isomorphic to $\mathbb Z$ \cite[7.3.5]{BH}, which is torsion-free.

\begin{eg}
\label{counterexamplethm}
Let $R$ be the localization of the determinantal ring $A$ as in Lemma~\ref{secondlemma} with respect to the maximal ideal $(X)$.
Then ${}^{f^n}\!\!R$ is not strongly rigid for any $n$.
\end{eg}

\begin{rmk} In view of Koh-Lee's theorem mentioned in the Introduction, 
Example~\ref{counterexamplethm} immediately yields the nonrigidity of 
${}^{f^n}\!\! R$ for any $n\geq c(R)$ (cf.\ Example~\ref{eg1}). 
But in fact, with a little further computation, the reader can check 
that this example yields nonrigidity for {\it all} $n>0$:
indeed, the module $N$ of infinite projective dimension constructed 
in Theorem~\ref{mainthm} by taking for the 
module $M$ the module $L$ of Lemma~\ref{secondlemma} satisfies  
$\Tor^R_1 (N,{}^{f^n}\!\! R)=0$ by the argument in the proof, but it can be shown that 
in fact $\Tor^R_2 (N,{}^{f^n}\!\! R)\neq 0$. 

We point out that we do not know of any example showing that ${}^{f^n}\!\! R$ is 
not (strongly) rigid when $\dim R=0$ or against a module $M$ of  finite length. 
See, however, the discussion at the end of Section~\ref{observe}. 
\end{rmk}

%%%%%%%%%%%%%%%%%%%%%%%%%%%%%%%%%%%%%%%%%%%%%%%%%%%%
\section{Some further observations} \label{observe}
%%%%%%%%%%%%%%%%%%%%%%%%%%%%%%%%%%%%%%%%%%%%%%%%%%%%
Throughout this section, $d$ will always be the dimension of the
ring and $n$ always denotes some positive integer. We know from
the previous section that $R$ could fail to be strongly rigid when
the ring $R$ is no longer complete intersection. However, we
still hope that to some extent such a property could hold over
non-complete intersection rings. In particular, we do not know any
example showing that ${}^{f^n}\!\! R$ is not rigid when $\dim R=0$
or against a module $M$ of  finite length.

We first make two more definitions, the first of which is just a
refinement of the definition of strongly rigidity of ${}^{f^n}\!\!
R$.

\begin{defn}\label{refinerigid} Let $h$ be a nonnegative integer. ${}^{f^n}\!\! R$ is called \emph{strongly rigid against modules of dimension at most $h$}, if for any integer $i$ and any finitely generated module $M$ of dimension at most $h$,  $\Tor_i^R(M,{}^{f^n}\!\! R)=0$ implies $\pd_RM <\infty$.
\end{defn}

\begin{defn}\label{numericalrigid} ${}^{f^n}\!\! R$ is called \emph{numerically rigid} if for any $R$-module $M$ of finite length, $\length (F^n(M))=p^{nd}\length(M)$ implies
$\pd_RM <\infty$. 
\end{defn}

The second definition above is motivated by the following characterization for modules of finite projective dimension and finite length over complete intersection rings.\\

\vspace{1mm}

\noindent {\bf{Theorem} \cite{D1, M2}} {\emph{Let $R$ be a complete intersection ring in characteristic $p$ and $M$ an $R$-module of finite length. Then the following are equivalent:\\
\noindent(1) $M$ has finite projective dimension,\\
\noindent(2) $\length (F^n(M))=p^{nd}\length(M)$ for all $n>0$,\\
\noindent(3) $\length (F^n(M))=p^{nd}\length(M)$ for some $n>0$.\\}
}

\vspace{1mm}

 The implication (3)$\Rightarrow$(1) in the above theorem simply
says that if $R$ is a complete intersection ring, then
${}^{f^n}\!\! R$ is \emph{numerically rigid} for any $n$. When $R$
is no longer a complete intersection ring, it is an open question
whether ${}^{f^n}\!\! R$ could still be numerically
rigid\footnote{the implication (1)$\Rightarrow$(2) in the theorem
fails even over Gorenstein rings, see \cite{MS}}. In fact, such a
question is closely related to the rigidity question discussed
earlier in this paper. The goal of this section is to explore the
connections between them.

\vspace{2.5mm}

The following technical result plays a crucial role here. Recall
that if $\length(M\otimes N)<\8$ and $\pd N <\8$, then $\chi(M,N)\
{ \stackrel{\textrm{def}}{= }} \ \sum_{j=0}^{\pd
N}(-1)^{j}\length(\Tor_j^R(M,N))$.
\begin{prop}
\label{prop1}
Let $R$ be a Noetherian local Cohen-Macaulay
ring of positive dimension and of characteristic $p>0$. Let $M$ be an $R$-module of codimension $c$. Suppose $\dim M>0$ and $R_{\p}$ is a complete intersection ring for every minimal prime $\p$ of $M$. Then
\[
\length(F^n_{R/\bold x}(M/\bold x M)) \geq q^c \chi(M, R/\bold x), \forall n>0,
\]
for any s.o.p. $\bold x$ of $F^n(M)$ which
is also $R$-regular. Moreover, equality holds for some $n>0$
if and only if: (1) $F^n(M)$ is Cohen-Macaulay and (2) for every
minimal prime $\p$ of $M$, $\pd_{R_\p} M_{\p}<\8$.
\end{prop}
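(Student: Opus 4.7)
The plan is to reduce to Dutta's theorem at the minimal primes of $M$ of maximal dimension, using the associativity formula for multiplicities as the glue. First I would record the base change identification
\[
F^n_{R/\bold x}(M/\bold x M) \ \cong \ F^n(M)/\bold x F^n(M),
\]
obtained by comparing presentations: if $M = \mathrm{coker}(A)$, both sides are presented over $R/\bold x$ by the matrix $\overline{A^{[q]}}$. In particular the quantity on the left of the desired inequality equals $\length(F^n(M)/\bold x F^n(M))$.

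The bound is then obtained by chaining three steps. Since $\bold x$ is a system of parameters for $F^n(M)$ in the Cohen--Macaulay ring $R$, Serre's classical inequality gives
\[
\length\bigl(F^n(M)/\bold x F^n(M)\bigr) \ \geq \ e(\bold x; F^n(M)),
\]
with equality iff $F^n(M)$ is Cohen--Macaulay --- this will account for condition (1). Since $\bold x$ is $R$-regular, the Koszul complex $K(\bold x; R)$ is a free resolution of $R/\bold x$, so $\Tor^R_i(M, R/\bold x) = H_i(\bold x; M)$ and Serre's Koszul multiplicity formula identifies $\chi(M, R/\bold x) = e(\bold x; M)$ (we use that $\bold x$ is also an s.o.p.\ for $M$, since $\Supp F^n(M) = \Supp M$). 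Finally, invoking the associativity formula
\[
e(\bold x; N) \ = \ \sum_{\p}\length_{R_\p}(N_\p)\, e(\bold x; R/\p),
\]
with $\p$ running over primes satisfying $\dim R/\p = \dim M$ (so $\height \p = c$), applied to both $N = M$ and $N = F^n(M)$: each such $\p$ is a minimal prime of $M$ at which, by hypothesis, $R_\p$ is a complete intersection of dimension $c$ and $M_\p$ has finite length, so Dutta's theorem yields $\length\bigl(F^n(M)_\p\bigr) \geq q^c \length(M_\p)$ with equality iff $\pd_{R_\p} M_\p < \infty$. Summing produces $e(\bold x; F^n(M)) \geq q^c e(\bold x; M)$, with the analogous criterion --- condition (2).

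Concatenating the three steps proves the stated inequality. For the equality clause, equality throughout forces both the Serre step and the Dutta step to be equalities: the former supplies condition (1), and once $F^n(M)$ is Cohen--Macaulay every one of its minimal primes (equivalently, every minimal prime of $M$) satisfies $\dim R/\p = \dim M$, so the latter becomes exactly condition (2); the converse follows by reading the chain backwards. The subtle point to verify carefully is extracting the one-sided inequality in Dutta's theorem at each minimal prime: the equivalence as recalled above only excludes equality when $\pd = \infty$, so the argument must appeal to the positivity (for instance, a decomposition of the form $\length(F^n_S(N)) = q^c \length(N) + (\text{nonnegative correction})$) implicit in the underlying proofs in \cite{D1, M2}, rather than to the equivalence alone.
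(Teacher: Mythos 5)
Your proof is correct and, after translation, is essentially the paper's own argument. The paper works directly with partial Euler characteristics: it writes $\length(F^n(M)\otimes R/\bold x) \geq \chi(F^n(M), R/\bold x)$ using $\chi_1 \geq 0$ (Lichtenbaum), decomposes $\chi(F^n(M), R/\bold x)=\sum_{\p}\length(F^n(M)_\p)\,\chi(R/\p, R/\bold x)$ over minimal primes, applies Dutta's local bound $\length(F^n_{R_\p}(M_\p))\geq q^c\length(M_\p)$ at each $\p$, and reassembles to $q^c\chi(M, R/\bold x)$. You phrase the same three steps in the language of Hilbert--Samuel multiplicities --- $\length\geq e$ with equality iff Cohen--Macaulay, then $e(\bold x;-)=\chi(-,R/\bold x)$ via Serre's Koszul-multiplicity theorem (valid here since $\bold x$ is $R$-regular), then the associativity formula for $e$ --- so the two accounts are the same lemmas under different names. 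One small thing you do a bit more carefully than the paper: you explicitly note that once $F^n(M)$ is Cohen--Macaulay it is unmixed, so \emph{every} minimal prime of $M$ has $\dim R/\p=\dim M$; this is needed to see that the equality case genuinely yields condition (2) at all minimal primes of $M$ and not just the top-dimensional ones that actually appear in the sum. Your closing worry about ``extracting the one-sided inequality'' is unnecessary: the cited result of Dutta \cite[Thm.\ 1.9]{D1} gives the inequality $\length(F^n_{R_\p}(M_\p))\geq q^c\length(M_\p)$ outright, and \cite[Thm.\ 5.2.2]{M2} identifies the equality case with finite projective dimension, exactly as you use.
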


For the proof the properties of the higher
Euler characteristics of Koszul complexes are used in an essential
way. We recall some terms and results here.

For a pair of modules $M$, $N$ such that $\length(M\otimes N)<\8$ and $\pd N <\8$,
the {\it $i$th higher Euler characteristic} is defined
by the following formula
\[
\chi_i(M,N)=
\sum_{j=i}^{\pd N}(-1)^{j-i}\length(\Tor_j^R(M,N)).
\]
By convention, $\chi(M,N)=\chi_0(M,N)$. Some standard facts about
$\chi$ and $\chi_i$ can be found in \cite{L, S}. In this paper, we
particularly need the following two well-known results  due to Serre and
Lichtenbaum  \cite[Lemma 1 and Theorem 1]{L}, \cite{S}.

\begin{lemma}\label{Li1} Let $M$ be an $R$-module and $\bold x=\{x_1,x_2,...,x_c\}$
an $R$-sequence such that $\length(M/\bold xM)<\8$. Then $\chi(M,
R/\bold x)\geq 0$, with the equality holding if and only if $\dim M < c$.
\end{lemma}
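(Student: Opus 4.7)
The plan is to identify $\chi(M, R/\bold x)$ with the Hilbert--Samuel multiplicity $e(\bold x; M)$ and then invoke the classical positivity of multiplicity. Since $\bold x$ is an $R$-regular sequence, the Koszul complex $K_\bullet(\bold x; R)$ is a free resolution of $R/(\bold x)$, which gives natural isomorphisms $\Tor_i^R(M, R/\bold x) \cong H_i(\bold x; M)$ for every $i$. Hence $\chi(M, R/\bold x)$ coincides with the Euler characteristic of the Koszul homology,
\[
\chi(\bold x; M) \;=\; \sum_{i\geq 0} (-1)^i \length H_i(\bold x; M).
\]

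Next I would observe that the finiteness of $\length(M/\bold x M)$ forces $(\bold x) + \Ann(M)$ to be $\m$-primary. Passing to $R/\Ann(M)$ then shows that the images of $\bold x$ form a system of parameters, so in particular $\dim M \leq c$, and only the cases $\dim M = c$ and $\dim M < c$ need to be analyzed.

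The main ingredient I would invoke is the classical Auslander--Buchsbaum--Serre formula: for any system of parameters $\bold x$ of length $c$ on a module $M$,
\[
\chi(\bold x; M) \;=\; \begin{cases} e(\bold x; M) & \text{if } \dim M = c, \\ 0 & \text{if } \dim M < c.\end{cases}
\]
Combined with the standard positivity $e(\bold x; M) > 0$ whenever $\bold x$ is a system of parameters on a module whose dimension equals the length of the system, this immediately yields both $\chi(M, R/\bold x) \geq 0$ and the equality criterion $\chi(M, R/\bold x) = 0 \iff \dim M < c$.

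The only substantive step is the displayed multiplicity formula, which is by now classical; one typical proof proceeds by induction on $c$, using the short exact sequence $0 \to R/(\bold x') \xrightarrow{\,x_c\,} R/(\bold x') \to R/(\bold x) \to 0$ (with $\bold x' = x_1, \ldots, x_{c-1}$, noting that $x_c$ is a nonzerodivisor on $R/(\bold x')$ since $\bold x$ is $R$-regular) to produce a long exact Tor sequence, and then combining this with additivity of multiplicities along short exact sequences. In our setting this is cited directly from the references \cite{L, S}, so no further work is needed; the only delicate step in the argument above is verifying that $\dim M \leq c$, which is handled by the parameter argument in the second paragraph.
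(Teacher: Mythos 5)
The paper does not supply a proof of this lemma; it is quoted as a classical result of Serre (and attributed jointly with Lichtenbaum), with the cited references being exactly the multiplicity-theoretic sources you invoke. Your reconstruction is correct and follows the standard route: use the $R$-regularity of $\bold x$ to replace $\Tor_\bullet^R(M,R/\bold x)$ by Koszul homology $H_\bullet(\bold x;M)$, observe that $\length(M/\bold xM)<\infty$ forces $\dim M\le c$, and then apply the Serre/Auslander--Buchsbaum formula $\chi(\bold x;M)=e(\bold x;M)$ when $\dim M=c$ together with $\chi(\bold x;M)=0$ when $\dim M<c$, concluding with positivity of multiplicities. This is precisely the content of the cited reference \cite{S}, so you have matched the intended argument rather than found a different one. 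One minor point of care: the identification $\chi(M,R/\bold x)=\chi(\bold x;M)$ genuinely uses that $\bold x$ is a regular sequence on $R$ (so that the Koszul complex resolves $R/\bold x$); the multiplicity formula itself holds for any system of parameters on $M$, without that hypothesis, and you correctly keep these two steps separate.
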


\begin{thm} \label{Li2} Let $M$ be an $R$-module and $\bold x$
an $R$-sequence such that $\length(M/\bold xM)<\8$. Then for any
$i>0$, $\chi_i(M, R/\bold x) \geq 0$, with the equality holding
if and only if $\Tor_i(M,R/\bold x)=0$ {\rm{(}}and hence $\Tor_j(M,R/\bold x)=0$ for all
$j\geq i${\rm{)}}.
\end{thm}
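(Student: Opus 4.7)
The statement is a classical result of Serre and Lichtenbaum; my approach is induction on the length $c$ of the regular sequence $\bold x = x_1,\ldots,x_c$. Write $a_j := \length \Tor_j(M,R/\bold x)$, which is finite for all $j$ by hypothesis and vanishes for $j>c$ since $K(\bold x)$ is a length-$c$ free resolution of $R/\bold x$. The base case $c=1$ is immediate: the Koszul complex $0\to R\xrightarrow{x_1} R\to 0$ produces only $\Tor_0(M,R/x_1)=M/x_1M$ and $\Tor_1(M,R/x_1)=(0:_M x_1)$, so $\chi_1=\length(0:_M x_1)\geq 0$ with equality iff $x_1$ is $M$-regular, i.e.\ iff $\Tor_1=0$, and all higher $\chi_i$ vanish trivially.

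For the inductive step, write $\bold x = (\bold x',x)$ with $\bold x' = x_1,\ldots,x_{c-1}$, $x=x_c$, and set $T_j := \Tor_j(M,R/\bold x')$. The short exact sequence
\[
0 \to R/\bold x' \xrightarrow{x} R/\bold x' \to R/\bold x \to 0
\]
induces a long exact sequence on $\Tor_*(M,-)$; identifying cokernels and kernels of the maps $x\colon T_j\to T_j$ extracts the finite-length short exact sequences
\[
0 \to T_i/xT_i \to \Tor_i(M,R/\bold x) \to (0:_{T_{i-1}}x) \to 0
\]
for every $i\geq 0$. Taking lengths yields $a_i=\length(T_i/xT_i)+\length(0:_{T_{i-1}}x)$; substituting into $\chi_i=\sum_{j\geq i}(-1)^{j-i}a_j$ and re-indexing the second contribution collapses to
\[
\chi_i(M,R/\bold x)=\length(0:_{T_{i-1}}x)+\sum_{j\geq i}(-1)^{j-i}\bigl[\length(T_j/xT_j)-\length(0:_{T_j}x)\bigr].
\]

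The first summand is manifestly nonnegative; the bracketed differences vanish whenever $T_j$ is Artinian, via the four-term sequence $0\to(0:_{T_j}x)\to T_j\xrightarrow{x}T_j\to T_j/xT_j\to 0$. The main obstacle, and the step requiring real care, is that the intermediate modules $T_j$ need not have finite length, so these bracketed differences are not individually zero and the inductive hypothesis is not directly applicable to $\chi_i(M,R/\bold x')$. To overcome this I would reduce to the case $\dim M = c$ (so that $\bold x$ is a system of parameters on $M$), filter each $T_j$ by dimension of support, and exploit that the Koszul Euler characteristics $\length(T_j/xT_j)-\length(0:_{T_j}x)$ on positive-dimensional components telescope across the alternation by the associativity formula for multiplicities; an alternative is to invoke Serre's general nonnegativity theorem for partial Euler characteristics of bounded free complexes with finite-length homology directly on $K(\bold x)\otimes M$, as in \cite{S}.

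Once $\chi_i\geq 0$ is established for all $i\geq 1$, the equality clause follows in two parts. First, combining the relation $\chi_i=a_i-\chi_{i+1}$ with $\chi_{i+1}\geq 0$ forces $a_i=0$ whenever $\chi_i=0$, so $\Tor_i(M,R/\bold x)=0$. Second, the "and hence" rigidity statement $\Tor_i=0\Rightarrow \Tor_j=0$ for $j\geq i$ falls out of the short exact sequences above: $\Tor_i(M,R/\bold x)=0$ gives $T_i/xT_i=0$ and $(0:_{T_{i-1}}x)=0$, and Nakayama's lemma applied to the finitely generated module $T_i$ with $x\in\m$ yields $T_i=0$; feeding this back into the corresponding sequence for $i+1$ and iterating completes the argument.
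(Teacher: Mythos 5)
The paper does not prove this statement; it is cited directly from Lichtenbaum \cite{L} and Serre \cite{S}, so there is no internal proof to compare against. Judged on its own terms, your proposal has two genuine gaps.

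The first you identify yourself. After the correct reductions --- the long exact sequence from $0 \to R/\bold x' \xrightarrow{x} R/\bold x' \to R/\bold x \to 0$, the short exact sequences
$0 \to T_i/xT_i \to \Tor_i(M,R/\bold x) \to (0:_{T_{i-1}}x) \to 0$ with $T_j = \Tor_j(M,R/\bold x')$, and the resulting formula
$\chi_i(M,R/\bold x)=\length(0:_{T_{i-1}}x)+\sum_{j\geq i}(-1)^{j-i}\bigl[\length(T_j/xT_j)-\length(0:_{T_j}x)\bigr]$
--- the alternating sum on the right is not visibly nonnegative, precisely because the $T_j$ need not have finite length. You flag this as ``the step requiring real care'' and offer two fixes, but neither is carried out, and the second one does not apply as stated: $K(\bold x)\otimes M$ has terms $M^{\binom{c}{j}}$, which are not free $R$-modules, so a nonnegativity theorem for bounded \emph{free} complexes with finite-length homology is not directly invocable. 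The first fix (filter by dimension, ``telescope across the alternation by the associativity formula'') is a gesture rather than an argument; it is exactly here that the real content of Lichtenbaum's proof lives, and you have not supplied it.

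The second gap is an outright error in the equality clause. You write that combining $\chi_i=a_i-\chi_{i+1}$ with $\chi_{i+1}\geq 0$ forces $a_i=0$ whenever $\chi_i=0$. It does not: from $\chi_i=0$ one only gets $a_i=\chi_{i+1}\geq 0$, which is vacuous. Equivalently, $a_i=\chi_i+\chi_{i+1}$ is a sum of two nonnegative quantities, so $a_i=0$ requires \emph{both} $\chi_i=0$ and $\chi_{i+1}=0$, and you have no independent argument that $\chi_i=0$ implies $\chi_{i+1}=0$ (indeed that implication is a consequence of the very theorem you are proving and cannot be assumed). The forward implication $\chi_i=0\Rightarrow\Tor_i(M,R/\bold x)=0$ has to be extracted from the structure of the inductive argument --- for instance from the explicit expression above together with Nakayama --- rather than from formal manipulation of the recursion $\chi_i=a_i-\chi_{i+1}$. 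The concluding ``and hence'' rigidity step is fine once one either invokes depth-sensitivity of Koszul homology or builds the rigidity statement into the induction on $c$, but it rests on the flawed step before it.
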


\begin{proof}[Proof of Proposition~\ref{prop1}]
Since $\Supp M=\Supp F^n(M)$ (see \cite{PS2}), $\min \Supp
F^n(M)=\min \Supp M$. We have
\begin{align*}
\length(F^n_{R/\bold x}(M/\bold x M))&=\length(F^n(M)\otimes_R
{R/\bold x }) \\
&\geq \chi(F^n(M), R/\bold x)\\
&=\sum_{\p \in \min \Supp M }\ \length(F^n(M)_{\p}) \chi(R/{\p},
R/\bold x)\\
&=\sum_{\p \in \min \Supp M }\ \length(F^n_{R_{\p}}(M_{\p}))
\chi(R/{\p},
R/\bold x)\\
&\geq \sum_{\p \in \min \Supp M }\ q^c\length(M_{\p}) \chi(R/{\p},
R/\bold x )\\
&= q^c \chi(M, R/\bold x)
\end{align*}
where the first inequality holds since $\chi_1(F^n(M), R/\bold x)\geq 0$ by Theorem~\ref{Li2}, the second and fourth equalities by Lemma~\ref{Li1}, and the second inequality is a result over complete intersection  rings \cite[Thm.\ 1.9]{D1} (note that $R_\p$ is complete intersection by the hypotheses).

Therefore, furthermore, equality holds if and only if $\chi_1(F^n(M), R/\bold x)=0$ and
$\length(F^n_{R_{\p}}(M_{\p}))=q^c\length(M_{\p})$ for every
minimal prime $\p$ of $M$. The former is equivalent to $F^n(M)$ being
Cohen-Macaulay by Theorem~\ref{Li2} and the latter is equivalent to $M_{\p}$ having
finite projective dimension over $R_{\p}$ by \cite[Thm.\ 5.2.2]{M2} since $R_{\p}$ is
complete intersection.
\end{proof}

\begin{thm}
\label{connection} Let $R$ be a Cohen-Macaulay local
ring with isolated singularity and of positive dimension. Fix some $n>0$.  If for every nonzerodivisor
$y \in R$, ${}^{f^n}\!\!(R/yR)$ is numerically rigid, then ${}^{f^n}\!\! R$ is strongly rigid against modules of dimension at most one.
\end{thm}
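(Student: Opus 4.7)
The plan is to combine Proposition~\ref{prop1} with the numerical rigidity hypothesis for the various $R/yR$. Let $M$ have $\dim M\leq 1$ with $\Tor^R_i(M,{}^{f^n}\! R)=0$ for some $i>0$; we aim to show $\pd_R M<\infty$. The natural target is to find $y\in\m$ achieving the length equality $\length F^n_{R/yR}(M/yM)=q^{d-1}\length(M/yM)$, after which numerical rigidity of ${}^{f^n}\!(R/yR)$ yields $\pd_{R/yR}(M/yM)<\infty$, whence $\pd_R M<\infty$ by Rees' formula.

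First I would reduce to the Cohen-Macaulay case by splitting off the finite-length submodule $H^0_\m(M)$ via $0\to H^0_\m(M)\to M\to \overline M\to 0$, treating the dim-0 and dim-1 pieces separately (by isolated singularity every $\Tor^R_j(-,{}^{f^n}\! R)$ with $j\geq 1$ is supported only at $\m$, so the Tor vanishing feeds back into both pieces through the long exact sequence). For the dim-1 CM piece $\overline M$, pick $y\in\m$ a nonzerodivisor on both $R$ and $\overline M$ (by prime avoidance, since $\m\notin\Ass R\cup\Ass\overline M$). Since the minimal primes of $\overline M$ are non-maximal, $R_\p$ is regular (hence CI) at them, so Proposition~\ref{prop1} applies with $\bold x=\{y\}$ and gives
\[
\length\bigl(F^n_{R/yR}(\overline M/y\overline M)\bigr)\geq q^{d-1}\chi(\overline M,R/yR)=q^{d-1}\length(\overline M/y\overline M),
\]
with equality iff $F^n(\overline M)$ is Cohen-Macaulay. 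For the finite-length piece (or the $\dim M=0$ case), choose $y$ a nonzerodivisor on $R$ annihilating the module; the change-of-rings identity $\Tor^R_j(-,{}^{f^n}\! R)\cong\Tor^{R/yR}_j(-,R/y^qR)$ (degenerating since $y^q$ is $R$-regular) combined with the $R/yR$-filtration of $R/y^qR$ whose successive quotients are all copies of ${}^{f^n}\!(R/yR)$ lets one translate the Tor vanishing over $R$ into a length equality over $R/yR$ suitable for invoking numerical rigidity.

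The principal obstacle is the depth step: deducing $\depth F^n(\overline M)\geq 1$ from a single Tor vanishing $\Tor^R_i(\overline M,{}^{f^n}\! R)=0$. I expect this to require a careful combination of depth-lemma arguments applied to $F^n$ of an appropriate segment of a projective resolution of $\overline M$, the finite-length nature of all higher Tors coming from isolated singularity, and possibly a syzygy-based bootstrap argument. This is the most delicate part of the proof; the rest of the argument consists of assembling Proposition~\ref{prop1}, the numerical rigidity hypothesis, and standard transfer of finite projective dimension from $R/yR$ to $R$.
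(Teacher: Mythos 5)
Your overall strategy is right, but there is a genuine gap exactly where you flagged one, and a secondary problem with the decomposition step.

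First, the decomposition $0\to H^0_\m(M)\to M\to\overline M\to 0$ does not behave the way you hope: a single vanishing $\Tor^R_i(M,{}^{f^n}\!R)=0$ gives, via the long exact sequence, only a surjection $\Tor^R_{i+1}(\overline M,{}^{f^n}\!R)\twoheadrightarrow\Tor^R_i(H^0_\m(M),{}^{f^n}\!R)$ and an injection $\Tor^R_i(\overline M,{}^{f^n}\!R)\hookrightarrow\Tor^R_{i-1}(H^0_\m(M),{}^{f^n}\!R)$; it does not yield a Tor vanishing for either piece separately, and the isolated-singularity observation (that all higher Tors are finite length) does not rescue this.

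Second, and more centrally, you correctly identify the depth step --- getting $\depth F^n(\overline M)\geq 1$, equivalently the Cohen--Macaulayness needed for equality in Proposition~\ref{prop1} --- as the crux, but you leave it open. The paper resolves it with a specific move you gesture at but don't execute: instead of working with $\overline M$ itself, choose an $R$-regular sequence $\bold x=\{x_1,\ldots,x_{d-1}\}\subseteq\Ann M$ (possible since $\height\Ann M\geq d-1$ and $R$ is CM), observe that $\Tor^R_j(R/\bold x,{}^{f^n}\!R)=0$ for $j>0$ (as $\bold x^q$ is also $R$-regular), so one may replace $M$ by its $(i-1)$th syzygy over $R/\bold x$ to reduce to $i=1$; then take the first $R/\bold x$-syzygy $K$ of $M$, giving $0\to K\to(R/\bold x)^t\to M\to 0$. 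Now $\Tor^R_1(M,{}^{f^n}\!R)=0$ is precisely the hypothesis that makes
\[
0\to F^n(K)\to (R/\bold x^{q})^t\to F^n(M)\to 0
\]
exact, so $F^n(K)$ is a submodule of a Cohen--Macaulay module of dimension one and hence is itself Cohen--Macaulay of dimension one --- the depth bound falls out for free, with no bootstrapping needed. This also handles $\dim M=0$ uniformly, since $K$ still has dimension one as a nonzero submodule of $(R/\bold x)^t$. From there the argument is as you envisaged: apply Proposition~\ref{prop1} to $K$ with a single parameter $y$ regular on both $R$ and $K$ (the minimal primes of $K$ are non-maximal, so the local complete-intersection hypothesis holds by isolated singularity), get $\length F^n_{R/yR}(K/yK)=q^{d-1}\length(K/yK)$, invoke numerical rigidity of ${}^{f^n}\!(R/yR)$ to get $\pd_{R/yR}K/yK<\infty$, lift to $\pd_R K<\infty$, and conclude $\pd_R M<\infty$ from the long exact sequence. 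So: right skeleton, but the syzygy-over-$R/\bold x$ device is the missing idea that closes your gap, and the $H^0_\m$-splitting should be discarded.
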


\begin{proof} Let $M$ be an $R$-module of dimension at most one. Assume
${}^{f^n}\!\!(R/yR)$ is numerically rigid for every nonzerodivisor
$y \in R$. We want to prove that for any $i>0$, $\Tor_i(M,{}^{f^n}\!\!
R)=0$ implies $\pd M < \8$. Let $\bold x=\{ x_1,...,x_{d-1} \}$ be an
$R$-sequence contained in $\Ann M$.
We may assume that $i=1$ by replacing $M$ by its $(i-1)$th syzygy
over the ring $R/(x_1,...,x_{d-1})$ and using that
$\Tor_i(R/(x_1,...,x_{d-1}) ,{}^{f^n}\!\! R)=0$ for all $i>0$ as
$\pd_R R/(x_1,...,x_{d-1}) < \infty$.

Letting $K$ be the first syzygy of $M$ as an $R/(x_1,...,x_{d-1})$-module,
we get a short exact sequence
\[
0 \to F^n(K) \to F^n((R/(x_1,...,x_{d-1}))^t) \to F^n(M) \to 0.
\]
It follows that $F^n(K)$ is a Cohen-Macaulay module
of dimension one. Hence by Proposition~\ref{prop1} (note that $R$ has
isolated singularity),
one has
$\length(F^n_{R/yR}(K/yK))=q^{d-1}\chi(K, R/yR)$ for every $y \in R$
which is regular on both
$K$ and $R$. Therefore, $\length(F^n_{R/yR}(K/yK))=q^{d-1}\length(K/yK)$.
Since we assume  ${}^{f^n}\!\!(R/yR)$ is numerically rigid,
$K/yK$ has finite projective dimension over $R/yR$. Thus $K$ has
finite projective dimension over $R$, whence $M$ does too by
the long exact sequence of Tors against the residue field k.
\end{proof}

\begin{rmk}
For the determinantal ring $R=k[X]/I_2$ used in
Section~\ref{example}, it was shown there that ${}^{f^n}\!\!R$ is
not strongly rigid against modules of dimension at most $5$ for any
$n$. In fact, we can also modify the example a little bit to show
that it is not strongly rigid against modules of dimension at most 
$3$. For $k$ of arbitrary characteristic though, we do not know if
${}^{f^n}\!\!R$ is strongly rigid against modules of dimension at most 0, 1, or 2. 
However, in characteristic $2$ we have an example which shows that  
${}^{f^1}\!\!R$ is not strongly rigid against modules
of dimension 1. In fact, if we set $k=\mathbb{Z}/2\mathbb{Z}$ and take the module
$N=R/(x_{12},x_{13},x_{21},x_{23},x_{31},x_{32})$, then it is easy to check 
that $\dim N=1$, $\depth F(N)=1$ and $\pd N=\8$. Taking an $R$-sequence
$x_1,x_2,x_3,x_4$ contained in the annihilator of $N$ and
embedding $N$ into a module of finite projective dimension over
$R/(x_1,x_2,x_3,x_4)$ (via the Auslander-Buchweitz short exact sequence
again), the cokernel of this embedding gives such an example.
Therefore, by Theorem~\ref{connection}, we also obtain an example
of Gorenstein ring $R$ in characteristic 2 for which the corresponding
$R$-module ${}^{f^1}\!\!R$ is not numerically rigid.
\end{rmk}

\specialsection*{ACKNOWLEDGEMENTS}
We thank Craig Huneke for crucial remarks which connected our works and 
Srikanth Iyengar who carefully read an earlier version of this manuscript and suggested a better statement for Corollary~\ref{perfect}.  We also thank the anonymous referee for many helpful comments.

\end{document}